\numberwithin{equation}{section}
\renewcommand\d{\partial}
\renewcommand\a{\alpha}
\renewcommand\b{\beta}
\renewcommand\o{\omega}
\newcommand\R{\mathbb R}\newcommand\N{\mathbb N}\newcommand\Z{\mathbb Z}
\def\de{\delta}
\def\O{\Omega}
\def\th{\theta}
\def\l{\lambda}
\def\vp{\varphi}
\def\epsilon{\varepsilon}
\def\e{\varepsilon}
\def\L{\Lambda}
\newcommand\br{\begin{rem}}
\newcommand\er{\end{rem}}
\newcommand\bp{\begin{pmatrix}}
\newcommand\ep{\end{pmatrix}}
\newcommand\be{\begin{equation}}
\newcommand\ee{\end{equation}}
\newcommand\ba{\begin{equation}\begin{aligned}}
\newcommand\ea{\end{aligned}\end{equation}}
\newcommand\nn{\nonumber}
\newcommand{\CalB}{\mathcal{B}}
\newcommand{\supp}{{\rm supp }}
\newcommand{\wto}{{\widetilde\Omega}}
\newcommand{\uu}{{\mathbf u}}
\newcommand{\vv}{{\mathbf v}}
\newcommand{\bg}{{\mathbf G}}
\newcommand{\ww}{{\mathbf w}}
\newcommand{\hh}{{\mathbf h}}
\newcommand{\bgg}{{\mathbf g}}
\newcommand{\vu}{\vc{u}}
\newcommand{\vc}[1]{{\bf #1}}
\newcommand{\dx}{{\rm d} {x}}
\newcommand{\dive}{{\rm div\,}}
\newtheorem{defi}{Definition}[section]
\newtheorem{theorem}[defi]{Theorem}
\newtheorem{proposition}[defi]{Proposition}
\newtheorem{lemma}[defi]{Lemma}
\newtheorem{corollary}[defi]{Corollary}
\newtheorem{remark}[defi]{Remark}
\numberwithin{equation}{section}
\begin{document}

\title{Uniform estimates for Stokes equations in a domain with a small hole and applications in homogenization problems}

\author{Yong Lu \footnote{Department of Mathematics, Nanjing University, 22 Hankou Road, Gulou District, 210093 Nanjing, China. Email: luyong@nju.edu.cn.}
\thanks{The author warmly thanks Eduard Feireisl and Christophe Prange for interesting discussions. The author particular thanks Jiaqi Yang for providing the idea to deal with the case $p=d, \ p=d'$ when $d\geq 3$. The work of the author was partially supported by project ANR JCJC BORDS
funded by l'ANR of France. The author acknowledges the partial support of the project LL1202 in the program ERC-CZ funded by the Ministry of Education, Youth and Sports of the Czech Republic.}}

\date{}

\maketitle

\begin{abstract}

We consider the Dirichlet problem of the Stokes equations in a domain with a shrinking hole in $\R^d, \ d\geq 2$.  A typical observation is that, the Lipschitz norm of the domain goes to infinity as the size of the hole goes to zero.  Thus, if $p\neq 2$, the classical results indicate that the  $W^{1,p}$ estimate of the solution may go to infinity as the size of the hole tends to zero. In this paper, we give a complete description for the uniform $W^{1,p}$ estimates of the solution for all $1<p<\infty$. We show that the uniform $W^{1,p}$ estimate holds if and only if $d'<p<d$ ($p=2$ when $d=2$).  We then give two applications in the study of homogenization problems in fluid mechanics: a generalization of the restriction operator and a construction of Bogovskii type operator in perforated domains with a quantitative estimate of the operator norm. 


\end{abstract}

{\bf Keywords}: Stokes equations; domain with a small hole;  homogenization in fluid mechanics; restriction operator; Bogovskii type operator.

 \tableofcontents

\renewcommand{\refname}{References}


\section{Introduction}

We consider the following Dirichlet problem of the Stokes equations with a divergence form source term in a bounded domain with a small hole:
\be\label{2}
\left\{\begin{aligned}
-\Delta \vv+\nabla \pi &=\dive \bg,\quad &&\mbox{in}~\O_\e:=\O \setminus (\e  \overline T),\\
\dive \vv&=0 ,\quad &&\mbox{in}~\O_\e=\O \setminus (\e  \overline T),\\
\vv&=0,\quad &&\mbox{on} ~\d\O_\e = \d \O\cup \e (\d T).
\end{aligned}\right.
\ee

This study is mainly motivated by the study of homogenization problems in the framework of fluid mechanics where the main goal is to describe the asymptotic behavior of fluid flows in domains perforated with a large number of tiny holes. As observed by Allaire \cite{ALL-NS1,ALL-NS2}, it is important to consider the Stokes equations near each single hole, where there arise problems of type \eqref{2}.

\medskip

 In \eqref{2}, $\vv:\O_\e \to \R^d$ is the \emph{unknown}, $\bg:\O_\e\to \R^{d\times d}$ is named the \emph{source function}, $0\in T\subset \Omega\subset \R^d$  are both simply connected, bounded domains. We assume that $\O$ is of class $C^1$ (Lipschitz domain can be also considered, while the corresponding result may be weaker; see Remark \ref{rem-lip}), and $T$ is of class $C^{2,\b}$ for some $0<\b<1$. Here, and throughout the paper, $\e\in (0,1)$ is a small parameter.  Without loss of generality, we suppose
$$
T\subset B_{1/2}:=B(0,1/2) \subset B_1:=B(0,1) \subset \Omega.
$$

We remark that the $C^{2,\b}$ smoothness assumption for $\d T$ may be relaxed. However, our main concern is the asymptotic behavior of the solutions to \eqref{2} as $\e\to 0$, so we will not work in the direction of relaxing the smoothness of $\d T$.

\subsection{Some notations}
We recall some notations used in this paper.  For any domain $\O\subset \R^d$ and a vector-valued function $\vv=(\vv_1,\cdots, \vv_d):\O \to \R^d$, we define its gradient $\nabla \vv$ to be a $d\times d$ matrix of which the $(i,j)$ element is
$$
\left(\nabla \vv\right)_{i,j}=\d_{x_j}\vv_i.
$$
For a matrix-valued function $\bg=(\bg_{i,j})_{1\leq i,j\leq d}:\O\to \R^{d\times d}$, we define its divergence to be a vector as
$$
\dive \bg =\Big(\sum_{j=1}^{d}\d_{x_j} \bg_{i,j}\Big)_{1\leq i\leq d}.
$$

\smallskip

For any $1< p < \infty$, the notation $p'$ denotes the Lebesgue conjugate component of $p$: $1/p'+1/p=1$.  For any $1<p<\infty$ and any domain $\O\subset \R^d$, we use classical notation $L^p(\O)$ to denote the space of Lebesgue measurable functions such that
$$
\|u\|_{L^p(\O)}:=\Big(\int_\O |u(x)|^p\,\dx\Big)^{1/p}<\infty.
$$
We use the classical notation $W^{1,p}(\Omega)$ to denote the Sobolev space with the norm
$$
\|u\|_{W^{1,p}(\Omega)}:=\big(\|u\|_{L^p(\O)}^p+\|\nabla u\|_{L^p(\O)}^p\big)^{\frac{1}{p}},
$$
where the derivatives are defined in the sense of distribution.

The notation $W_0^{1,p}(\Omega)$ denotes the completion of $C_c^\infty(\O)$ with respect to the norm $\|\cdot\|_{W^{1,p}(\Omega)}$. The notation $W^{-1,p}(\O)$ denotes the dual space of the Sobolev space $W_0^{1,p'}(\O)$. The definition of the $W^{-1,p}(\O)$ norm is classical:
\be\label{def-W-1p}
\|u\|_{W^{-1,p}(\O)}:=\sup_{\phi \in C_c^\infty(\O),\,\|\phi\|_{W^{1,p'}=1}}|\langle u,\phi \rangle|.\nn
\ee
We use the notation $L^{p}_0(\Omega)$ to denote the space of $L^p(\Omega)$ functions with zero mean value:
\be\label{Lp0}
L^p_0(\Omega):=\Big\{f\in L^p(\Omega): \, \int_{\O} f\,\dx=0\Big\}.\nn
\ee

\subsection{Known results for Stokes equations in a bounded domain}

We remark that, throughout the paper,  a solution of the Dirichlet problem of the Stokes equations defined in a domain $D$ means a weak solution: the equations are satisfied in the sense of distribution with test functions in $C_{c}^{\infty}(D)$ and the boundary conditions are satisfied in the sense of traces of Sobolev functions. This definition of weak solutions is now widely used and well understood.

The well-posedness theory for the Dirichlet problems of the Stokes equations in bounded domains is well developed. We give a remark to collect some known results.

We refer to Theorem IV.6.1 in \cite{Galdi-book} (for domains of class $C^2$), Theorem 5.1 in \cite{DM} (for domains of class $C^1$), Section 3 of \cite{MM} (a review of results for Lipschitz and $C^1$ domains) and Theorem 2.9 in \cite{BS} (for Lipschitz domains in $\R^3$) for more details.
\begin{remark}\label{rem:stokes}
\begin{itemize}
\item[{\rm ({\bf i})}.]
For any connected, bounded domain $\O\subset \R^d,~d\geq 2$ of class $C^1$, for any $1<p<\infty$ and any $\hh\in W^{-1,p}(\O;\R^d)$, the following Dirichlet problem of the Stokes equations
\ba
-\Delta \ww+\nabla \pi= \hh,\quad \dive \ww=0, \quad \mbox{in}~\O;\qquad \ww=0,\quad \mbox{on} ~\d\O\nn
\ea
 admits a unique solution $(\ww,\pi)\in W_0^{1,p}(\O;\R^d)\times L_0^p (\O)$ and there holds the estimate
\be\label{est-classical2}
\|\ww\|_{W^{1,p}_0(\Omega;\R^d)}+ \|\pi\|_{L^{p}(\Omega)} \leq C(p,d,\O) \ \|\hh\|_{W^{-1,p}(\Omega;\R^d)}.
\ee

\item[{\rm ({\bf ii})}.]  If the bounded domain $\O$ is only of class Lipschitz, the result in {\rm ({\bf i})} holds true for $p$ near $2$:     $p_1'(d,\O) < p < p_1(d,\O)$ for some $p_1(d,\O)>2$. In particular, in three dimensions, one has $p_1(3,\O)>3$ (see {\rm Theorem 2.9 in \cite{BS}}).

\end{itemize}

\end{remark}

\subsection{Main results}

Now we state our main theorem concerning the uniform $W^{1,p}$ estimate:
\begin{theorem}\label{thm-sto} Let $d'<p<d$ ($p=2$ if $d=2$), and let $\bg\in L^p(\Omega_\e;\R^{d\times d})$. Then the unique solution $(\vv,\pi) \in W_0^{1,p}(\O_\e;\R^d)\times L_0^{p}(\O_\e)$ to \eqref{2} satisfies the uniform estimate
\be\label{est-sto}
\|\nabla \vv \|_{L^p(\Omega_\e;\R^{d\times d})}+ \|\pi\|_{L^p(\Omega_\e) } \leq C \, \|\bg\|_{L^p(\O_\e;\R^{d\times d})}
\ee
for some constant $C=C(p,d,\O,T)$ independent of $\e$.

\end{theorem}

We give two remarks about  Theorem \ref{thm-sto}:
\begin{remark}\label{rem-lip}
\begin{itemize}
\item[{\rm ({\bf i})}.]
For any fixed $\e$, if $\bg\in L^p(\Omega_\e;\R^{d\times d})$ with $1<p<\infty$, the well-posedness of \eqref{2} is known, see {\rm Remark \ref{rem:stokes}}. The key novelty of  {\rm Theorem \ref{thm-sto}} lies in obtaining the uniform estimate as $\e \to 0$.  When $p=2$, the estimate constant $C(2,d,\O_\e)=1$ in \eqref{est-classical2}, while when $p\neq 2$, the classical estimate constant $C=C(p,d,\O_\e)$ in \eqref{est-classical2} depends on the $C^1$ (or Lipschitz) character of the domain $\O_\e$, which is unbounded as $\e \to 0$.

\item[{\rm ({\bf ii})}.]
The result in {\rm Theorem \ref{thm-sto}} can be generalized to domains with Lipschitz boundary. In the case with Lipschitz domain $\O$, the choice range of $p$ obeys the restriction $$\max \{p_1',d'\}<p<\min\{p_1,d\},$$ where $p_1$ is restricted as in {\rm Remark \ref{rem:stokes} ({\bf ii})}. This is because we only have the existence of solutions in $W^{1,p}_0$ with $p_1'<p<p_1$.  In particular, we obtain the same results when $d=3$ as the $C^{1}$ domain case: {\rm Theorem \ref{thm-sto}} still holds true when $\O$ is of class Lipschitz.

\end{itemize}
\end{remark}

\medskip

The following result indicates that the choice range of $p$ in Theorem \ref{thm-sto} is sharp.

\begin{theorem}\label{thm-sto1-new}
 Let $1<p\leq d'$ or $p\geq d$ ($p\neq 2$ if $d=2$).   Then for any $0<\e<1$, there exists $\bg_\e\in L^p(\Omega_\e;\R^{d\times d})$ satisfying $\|\bg_\e\|_{L^p(\Omega_\e;\R^{d\times d})}\leq 1$ such that the solution family $(\vv_\e,\pi_\e) \in W^{1,p}_0(\Omega_\e;\R^{d})\times L^{p}_0(\O_\e) $ to \eqref{2} with source function $\bg_\e$ satisfies
\be\label{est-sto2-new}
\liminf_{\e \to 0}\|\nabla \vv_\e\|_{L^p(\Omega_\e;\R^{d\times d})} =\infty.
\ee

\end{theorem}

\subsection{Motivation and background}\label{sec:motivation}

The motivation of this paper mainly comes from the study of homogenization problems in the framework of fluid mechanics, that is the study of fluid flows in domains perforated with a large number of tiny holes, and the goal is to describe the asymptotic behavior of fluid flows (governed by Stokes or Navier-Stokes equations) as the number of holes goes to infinity and the size of the holes goes to zero simultaneously. The limit equations that describe the limit behavior of fluid flows are called {\em homogenized equations} which are defined in homogeneous domain without holes.

\medskip

Allaire in \cite{ALL-NS1} and \cite{ALL-NS2} gave a systematic study for the homogenization problems of the Stokes equations and the stationary incompressible Navier-Stokes equations. Allaire showed that the homogenized equations are determined by the ratio between the size of holes and the mutual distance of holes. In particular for three-dimensional domains perforated with holes of diameter $O(\e^{\alpha})$ with $\e$ the size of the mutual distance of holes, if $1\leq \a<3$ corresponding to the case of {\em large holes}, in the limit $\e \to 0$, the fluid is governed by the Darcy's law; if $\a>3$ corresponding to the case of {\em small holes}, the motion of the fluid does not change in the homogenization process and in the limit there arise the same Stokes equations or the same stationary incompressible Navier-Stokes equations in homogeneous domains; if $\a=3$ corresponding to the case of \emph{critical size of holes}, the limit equations are governed by the Brinkman's law --- a combination of the Darcy's law and the original equations.

\medskip

For the case $\a=1$ meaning that the size of the holes is comparable to their mutual distance, earlier than Allaire, Tartar \cite{Tartar1} recovered the Darcy's law from the homogenization of the Stokes equations.  Still for the case $\a=1$, the study is extended to more complicated equations: Mikeli\'{c} \cite{Mik} for the incompressible Navier-Stokes equations; Masmoudi \cite{Mas-Hom} for the compressible Navier-Stokes equations; Feireisl, Novotn\'y and Takahashi \cite{FNT-Hom} for the complete Navier-Stokes-Fourier euqations; the Darcy's law is recovered in all these studies.

Cases with different size of holes are also considered: Feireisl, Namlyeyeva and Ne{\v c}asov{\' a} in \cite{FeNaNe} studied the case $\a=3$ (critical size of holes) for the incompressible Navier-Stokes equations and they derived Brinkman's law in the limit; Feireisl, Diening and the author in \cite{FL1, DFL} considered the case $\a>3$ (small holes) for the stationary compressible Navier-Stokes equations and it is shown that the homogenized equations remain the same as the original ones. The above two results coincide with the study of Allaire for the Stokes equations.

\medskip

In the study of these homogenization problems, the so-called \emph{restriction operator} plays an important role. A restriction operator, roughly speaking, is a linear mapping from $W_0^{1,p}(D)$ to $W_0^{1,p}(D_\e)$ and preserves the divergence free property, where $D$ is the domain without holes and $D_\e$ is the domain perforated with holes.

When $p=2$, the construction of a restriction operator from $W_0^{1,2}(D)$ to $W_0^{1,2}(D_\e)$ is given by Tartar in Lemma 4 in \cite{Tartar1} for the case $\a=1$ and by Allaire in Section 2.2 in \cite{ALL-NS1} for general $\a\geq 1$.  One key step in Allaire's construction is to study the Dirichlet problem for the Stokes equations in a neighborhood of each hole. After rescalling and changes of variables, a Dirichlet problem of type \eqref{2} arises and the uniform estimate of type \eqref{est-sto} with $p=2$ is needed.  Precisely, the domain $\Omega_\e$ in Allaire's case is $B_1\setminus \e^{\a-1}\overline T$ where $\e^{\a-1}$ is the ratio between the diameter of holes which is $\e^\a$ and the mutual distance of holes which is $\e$. In the case with $p=2$, the uniform estimate \eqref{est-sto} is rather straightforward with estimate constant $1$.

A restriction operator from $W_0^{1,p}(D)$ to $W_0^{1,p}(D_\e)$ with general $p$ is needed in studying homogenization problems for more complicated systems, such as in \cite{Mik} concerning incompressible Navier-Stokes equations, in \cite{Mas-Hom} concerning compressible Navier-Stokes equations,  in \cite{FNT-Hom} concerning compressible Naiver-Stokes-Fourier system, and particularly in \cite{Book-Homo} concerning non-Newtonian flows. However, in all these references, only the case $\a=1$ is considered, where the model domain 
$$B_1\setminus \e^{\a-1}\overline T=B_1\setminus \overline T$$
is independent of $\e$.  Consequently, the uniform $W^{1,p}$ estimates of the solution to the Stokes equations on $B_1\setminus \overline T$ can be obtained by applying the classical results (see Remark \ref{rem:stokes}). As a consequence, when $\a=1$, such restriction operator from $W_0^{1,p}(D)$ to $W_0^{1,p}(D_\e)$ with general $p$ can be constructed (by A. Mikeli\'c, see Lemma 3.2 in \cite{Book-Homo}).

To extend the study of the homogenization problems to {\em general size} of holes, it is motivated to study the uniform $W^{1,p}$ estimates of the Dirichlet problems of elliptic equations in a domain with a shrinking hole, such as $B_1\setminus \e^{\a-1}\overline T$.

\medskip

The author started this study in \cite{Lu-Lap} by focusing on the Laplace equation with divergence source term in three-dimensional domain $B_1\setminus \e \overline T$ where $B_1\subset \R^3$ is the unit ball. We showed that if $3/2<p<3$, the $W^{1,p}$ estimate is uniform as $\e \to 0$; if $1<p<3/2$ or $3<p<\infty $, there are counterexamples indicating that the uniform $W^{1,p}$ estimate do not hold.

The results in this paper for the Stokes equations coincide with the results in \cite{Lu-Lap} for the Laplace equations. The study for the Stokes equations case is more complicated and there arises new issues and difficulties.  For example, one key step of the proof in \cite{Lu-Lap} relies on the explicit formula of the Green function of Laplace operator in the unit ball (see Lemma 3.2 in \cite{Lu-Lap}). In this paper we derive rather abstract analysis (see Lemma \ref{lem:lap-Be}) instead of using the Green functions such that we can generalize the argument in \cite{Lu-Lap} to make it be suitable for Stokes equations. Moreover, after rescaling we turn to consider the problem in domain $(\Omega/\e)\setminus \overline T$ (see Section \ref{sec:resc} in this paper and Section 2 in \cite{Lu-Lap}). We then decompose the original problems into two parts by employing a cut-off function defined near the hole: one part is defined in a bounded domain and another part is defined in an enlarging domain without hole. For Stokes equations, this decomposition will destroy the divergence free condition. In Section \ref{sec:change-var}, we employ the Bogovskii operator to remedy this problem.

The study for the Stokes equations case in this paper is essentially needed to construct a restriction operator from $W_0^{1,p}(D)$ to $W_0^{1,p}(D_\e)$ for general $p$. Furthermore, this is need in the study of homogenization problems of more complicated situations: such as the homogenization of compressible Navier-Stokes equations and the homogenization of non-Newtonian flows.
\medskip

This paper is organized as follows. In Section \ref{sec:app-hom} we exhibit applications of  Theorem \ref{thm-sto} in homogenization problems.  Section \ref{sec:sto-s} and Section \ref{sec:pf12} are devoted to the proof of Theorem \ref{thm-sto} and Theorem \ref{thm-sto1-new} respectively. Throughout this paper,  $C$ denotes a positive constant independent of $\e$ unless there is a specification. However, the value of $C$ may change from line to line.

\section{Applications in homogenization problems}\label{sec:app-hom}

In this section, we give an application of our main result Theorem \ref{thm-sto} in homogenization problems: we generalize Allaire's construction of restriction operator from $L^2$ framework to $L^p$ framework. As mentioned in Introduction, the restriction operator plays a very important role in the study of homogenization problems (one can find it in most literatures in this field). This generalization will be needed in the study of more complicated problems in this field, particularly for the case where the size of holes is much smaller than the mature distance between holes. 

\subsection{Perforated domains}\label{sec:domain}
In this section, we give a precise description for the perforated domains mentioned before. We focus on the case where the holes are periodically distributed in a bounded three dimensional domain. Let $D \subset \R^3$ be a bounded domain of class $C^{1}$, and let $\{T_{\e,k}\}$ be a family of $C^{2,\b}$ domains satisfying for any $k\in \Z^3$:
\be\label{def-holes1}
T_{\e,k}\subset  \subset B(x_k,b \e^\alpha) \subset  B(x_k, b_{1} \e)  \subset \subset \e C_k:=(0,1)^3+k
\ee
for some $x_k\in T_{\e,k} $, some $0<\de<1, \ b>0, \ b_{1}>0, \ \a\geq 1$ independent of $\e$. Moreover, we assume that each hole $T_{\e,k}$ is similar to the same \emph{model hole} $T$ rescaled by $\e^\a$:
 \be\label{T-Te}
 T_{\e,k}=\e^\a \mathcal{O}_{k}(T)+x_k,
 \ee
 where $\mathcal{O}_{k}$ is a rigid rotation. We assume $T$ is a simply connected domain of class $C^{2,\b}$ and is independent of $\e$.

The small domains $T_{\e,k}$ are called the \emph{holes} or \emph{obstacles}, the domain $D$ is the homogeneous domain without holes, and the corresponding \emph{perforated domain} $D_\e$ is defined as
\be\label{domain}
D_\e:=D \setminus  \bigcup_{k\in K_\e}\overline T_{\e,k},\quad K_\e:=\{k \ |\ \e\overline {C}_k\subset \Omega\}.\nn
\ee

We can see that in $D_\e$, the diameters and mutual distances of the holes are of size $O(\e^\a)$ and $O(\e)$, respectively,  and the number of holes satisfies
\be\label{number-holes}
|K_\e|= \frac{|D|}{\e^3}\big(1+o(1)\big),\quad \mbox{as $\e\to 0$}.\nn
\ee

\subsection{Restriction operator and Bogovskii type operator}\label{sec:res-bog}

By employing the uniform estimates obtained in  Theorem \ref{thm-sto}, we can generalize the construction of restriction operator by Allaire \cite{ALL-NS1} from the $L^2$ framework to the $L^p$ framework:
\begin{theorem}\label{thm-res} Let $D$ and $D_\e$ be given in {\rm Section \ref{sec:domain}}. For any $3/2<p<3$, there exists a linear operator $R_\e$ from $W_0^{1,p}(D;\R^3)$ to $W_0^{1,p}(D_\e;\R^3)$ such that
\ba\label{pt-res}
&\uu \in W_0^{1,p}(D_\e;\R^3) \Longrightarrow R_\e (\tilde \uu)=\uu \ \mbox{in}\ D_\e,\ \mbox{where} \ \tilde \uu:=\begin{cases}\uu \ &\mbox{in}\ D_\e,\\ 0  \ &\mbox{on}\ D\setminus D_\e, \end{cases}\\
&\uu \in W_0^{1,p}(D;\R^3),\  \dive \uu =0 \ \mbox{in} \ D \Longrightarrow \dive R_\e (\uu) =0 \ \mbox{in} \ D_\e,\\
&\uu \in W_0^{1,p}(D;\R^3)\Longrightarrow \|\nabla R_\e(\uu)\|_{L^p(D_\e;\R^{3\times3})}\\
&\qquad \qquad \qquad\qquad \qquad\leq C \, \big( \|\nabla \uu\|_{L^p(D;\R^{3\times3})}+\e^{\frac{(3-p)\a-3}{p}} \|\uu\|_{L^p(D;\R^3)}\big).
\ea

\end{theorem}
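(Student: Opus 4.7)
The plan is to adapt Allaire's cell-by-cell construction of the restriction operator \cite{ALL-NS1} from the $L^2$ to the $L^p$ framework, with the uniform $W^{1,p}$ estimate of Theorem~\ref{thm-sto} replacing the elementary $L^2$ energy inequality. For each $k\in K_\e$, I would build a local corrector $z_{\e,k}\in W^{1,p}_0(\e C_k\setminus T_{\e,k};\R^3)$ supported in $\e C_k$, equal to $\uu$ on $\partial T_{\e,k}$, vanishing on $\partial(\e C_k)$, and divergence-free on $\e C_k\setminus T_{\e,k}$ whenever $\uu$ is divergence-free on $D$. The global operator $R_\e(\uu):=\uu-\sum_{k\in K_\e}z_{\e,k}$ then satisfies property (ii) by construction, and property (i) is enforced by choosing $z_{\e,k}$ via a Stokes-type projection that returns zero whenever $\uu$ already vanishes on $\overline{T_{\e,k}}$. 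Concretely, following Allaire, one realizes this by $z_{\e,k}:=\eta_{\e,k}\uu-\phi_{\e,k}$, with $\eta_{\e,k}$ a rescaled cutoff adapted to the pair $(\e C_k,T_{\e,k})$ via \eqref{T-Te}, and $\phi_{\e,k}$ an inhomogeneous Bogovskii corrector satisfying $\dive\phi_{\e,k}=\uu\cdot\nabla\eta_{\e,k}$ (up to a cellwise constant ensuring compatibility when $\dive\uu\ne 0$) with zero Dirichlet data.

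The central analytic input is an $\e$-uniform $W^{1,p}$-bound on $\phi_{\e,k}$. I would view $\phi_{\e,k}$ as the velocity field of an auxiliary Stokes system on the annular domain $\e C_k\setminus T_{\e,k}$ with source built from $\uu$ and $\nabla\eta_{\e,k}$, rewrite it in divergence form $\dive\bg$ amenable to Theorem~\ref{thm-sto}, and rescale by $y=(x-x_k)/\e$. This sends the geometry to the reference domain $C_0\setminus\e^{\alpha-1}\mathcal{O}_k(T)$, precisely the setting of Theorem~\ref{thm-sto} with the small parameter $\e^{\alpha-1}$ in place of $\e$; hence the Stokes estimate is $\e$-uniform in the range $3/2<p<3$ (which is $d'<p<d$ for $d=3$). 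Tracking the change of variables, $\nabla_x=\e^{-1}\nabla_y$, the $L^p$-volume factor $\e^{3/p}$, and the interplay with trace estimates for $\uu|_{\partial T_{\e,k}}$, the weight $\e^{\frac{(3-p)\alpha-3}{p}}$ emerges in front of $\|\uu\|_{L^p(D)}$ (and reduces to Allaire's $\e^{(\alpha-3)/2}$ at $p=2$), while the $\|\nabla\uu\|_{L^p(D)}$ term is inherited unweighted. Summing $p$-th powers over the $\asymp\e^{-3}$ disjoint cells then closes the claimed bound (iii).

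The main obstacle is precisely the uniform $W^{1,p}$-bound for the Bogovskii/Stokes corrector on the degenerating annular domain $\e C_k\setminus T_{\e,k}$. On generic domains with $\e$-degenerating $C^1$-character this bound can fail for $p\ne 2$, and Theorem~\ref{thm-sto} is the tool that supplies the required uniformity on the rescaled reference domain $C_0\setminus\e^{\alpha-1}\mathcal{O}_k(T)$; the range $3/2<p<3$ is inherited directly, and the counterexamples of Theorem~\ref{thm-sto1} indicate that it cannot be enlarged by this cellwise method.
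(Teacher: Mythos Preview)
Your plan is the same as the paper's: Allaire's cell-by-cell construction, rescaled to a reference domain with a hole of size $\e^{\alpha-1}$, with Theorem~\ref{thm-sto} supplying the uniform $W^{1,p}$ estimate in the range $3/2<p<3$. The paper works over balls $B(x_k,b_1\e)\subset\e C_k$ rather than the cubes themselves and, instead of your explicit formula, \emph{defines} the local replacement as the solution $\uu_{\e,k}$ of an inhomogeneous Stokes problem \eqref{def-uek}. This implicit definition is what makes property~(i) automatic: when $\tilde\uu$ already vanishes on $\overline{T_{\e,k}}$, the extra mean-value term in $\eqref{def-uek}_2$ drops out and the unique solution is $\uu_{\e,k}=\uu$, $p_{\e,k}=0$. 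Your explicit ansatz $z_{\e,k}=\eta_{\e,k}\uu-\phi_{\e,k}$ does not have this feature (if $\uu$ vanishes only on $T_{\e,k}$, $\eta_{\e,k}\uu$ is still nonzero on the annulus and $\phi_{\e,k}$ has no reason to cancel it), so to enforce~(i) you must pass to the Stokes projection you mention---which is exactly \eqref{def-uek}.

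One technical point you elide: Theorem~\ref{thm-sto} applies only to the \emph{divergence-free} Stokes system with homogeneous Dirichlet data, so you cannot estimate $\phi_{\e,k}$ (which has prescribed divergence $\uu\cdot\nabla\eta_{\e,k}$) by it directly. The paper separates the estimate into three pieces: a boundary-lifting operator on $B_1\setminus\eta\overline{T^s}$ (Lemma~\ref{lem:Alem1}), an $\eta$-\emph{uniform} Bogovskii operator on the same domain to kill the residual divergence (Lemma~\ref{lem:Alem2}), and only then Theorem~\ref{thm-sto} for the remaining zero-divergence, zero-trace Stokes problem (Lemma~\ref{lem:Alem3}). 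The uniform Bogovskii bound of Lemma~\ref{lem:Alem2} is itself nontrivial on a degenerating annulus and is established by a separate argument (uniform John-domain structure as $\eta\to 0$), not as a consequence of Theorem~\ref{thm-sto}. Once you insert these two auxiliary lemmas, your scaling computation and the emergence of the weight $\e^{\frac{(3-p)\alpha-3}{p}}$ match the paper's \eqref{def-eta-e} exactly.
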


\begin{remark}
We remark that the result in Theorem \ref{thm-res} generalizes the result of Allaire (see Proposition 3.4.10 in \cite{ALL-NS2}) where only the case $p=2$ is considered, and also generalizes the result of Mikeli\'c (see Lemma 3.2 in \cite{Book-Homo}) where the restriction $\a=1$ is needed.
\end{remark}

Here we find one direct corollary of this generalized restriction operator, that is to obtain some uniformly bounded Bogovskii type operators, which can be seen as the inverse of the divergence operators on perforated domains. Bogovskii type operators are typically used in the study of compressible Navier-Stokes equations, in order to improve the integrability of the pressure term.

\begin{corollary}\label{thm-bog} Let $D_\e$ be the domain defined in {\rm Section \ref{sec:domain}}. For any $3/2<p<3$, there exists a linear operator $\CalB_\e$ from $L_0^{p}(D_\e)$ to $W_0^{1,p}(D_\e;\R^3)$ such that for any $f\in L_0^{p}(D_\e)$,
\ba\label{pt-bog}
&\dive \CalB_\e(f) =f \ \mbox{in} \ D_\e,\\
&\|\CalB_\e(f)\|_{W_0^{1,p}(D_\e;\R^3)}\leq C\, \Big(1+\e^{\frac{(3-p)\a-3}{p}}\Big)\|f\|_{L^p(D_\e)},
\ea
for some constant $C$ independent of $\e$.
\end{corollary}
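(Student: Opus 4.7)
The plan is to construct $\CalB_\e$ as the composition $R_\e \circ \CalB_D \circ E_0$, where $E_0 : L_0^p(D_\e) \to L_0^p(D)$ denotes extension by zero and $\CalB_D$ is the classical Bogovskii operator on the bounded $C^1$ domain $D$. Given $f \in L_0^{p}(D_\e)$, the extension $\tilde f := E_0 f$ is again zero-mean on $D$, so the classical theory supplies $\vv := \CalB_D(\tilde f) \in W_0^{1,p}(D;\R^3)$ with $\dive \vv = \tilde f$ in $D$ and $\|\vv\|_{W_0^{1,p}(D)} \leq C\|f\|_{L^p(D_\e)}$. I would then set $\CalB_\e(f) := R_\e(\vv)$.

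The norm estimate is a direct combination of the above inputs. Property~(3) of Theorem~\ref{thm-res} together with the classical Bogovskii bound yields
\[
\|\nabla \CalB_\e(f)\|_{L^p(D_\e)} \leq C\bigl(\|\nabla \vv\|_{L^p(D)} + \e^{\frac{(3-p)\a - 3}{p}}\|\vv\|_{L^p(D)}\bigr) \leq C\bigl(1 + \e^{\frac{(3-p)\a - 3}{p}}\bigr)\|f\|_{L^p(D_\e)},
\]
and the full $W_0^{1,p}(D_\e)$ norm is controlled by the gradient norm via a uniform Poincar\'e inequality: any $u \in W_0^{1,p}(D_\e)$ extends by zero to an element of $W_0^{1,p}(D)$, and Poincar\'e on the fixed domain $D$ gives a constant depending only on $D$.

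The decisive step, which I expect to be the main obstacle, is the divergence identity $\dive \CalB_\e(f) = f$ in $D_\e$. This is not a formal consequence of the three properties of $R_\e$ listed in Theorem~\ref{thm-res}; together they only guarantee that $R_\e$ preserves the divergence-free subspace. Instead one has to open up Allaire's cellwise construction: on each period cube $\e C_k$, $R_\e$ coincides with the identity outside the ball $B_{\e,k}$, and on the annulus $B_{\e,k}\setminus \overline T_{\e,k}$ it subtracts a corrector $w_k$ whose divergence is the constant $|B_{\e,k}\setminus T_{\e,k}|^{-1}\int_{T_{\e,k}} \dive\vv\,\dx$, inserted precisely to absorb the flux of $\vv$ through $\d T_{\e,k}$. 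Since $\tilde f = E_0 f$ vanishes identically on every hole $T_{\e,k}$, this corrective divergence is zero, and therefore $\dive R_\e(\vv) = \dive \vv = \tilde f = f$ pointwise on $D_\e$, which is exactly the Bogovskii identity. The whole argument hinges on the fact that the zero extension $\tilde f$ is supported away from the holes, a feature automatically built into the composition $\CalB_D \circ E_0$.
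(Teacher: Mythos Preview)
Your proof is correct and follows essentially the same route as the paper: the operator is defined as $\CalB_\e = R_\e \circ \CalB_D \circ E$, the norm bound comes from the estimate in Theorem~\ref{thm-res} combined with the classical Bogovskii bound on $D$, and the divergence identity is obtained by going back to the explicit cellwise formula \eqref{def-uek} and observing that the additive constant $|B(x_k,b_1\e)\setminus \overline T_{\e,k}|^{-1}\int_{T_{\e,k}}\dive\uu\,\dx$ vanishes because $\dive\uu=E(f)=0$ on each hole. Your remark that this last step requires opening the construction rather than using only the abstract properties in \eqref{pt-res} is exactly the point the paper makes implicitly by citing \eqref{def-Reu}--\eqref{def-uek}; note only that in the actual construction the constant is \emph{added} to $\dive\uu$ (so $\dive R_\e(\uu)=\dive\uu+c_k$ on the annulus), not subtracted, though of course this is immaterial once $c_k=0$.
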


The proof for Theorem \ref{thm-res} and Corollary \ref{thm-bog} is given in Section \ref{sec:res-bog-proof}. About these two results, we give several remarks. The first one indicates that the estimate constant $\e^{\frac{(3-p)\a-3}{p}}$ may be optimal in the sense of capacity.
\begin{remark}\label{rem-optimal-constant}
 Let $1<p<\infty$. Here we focus on the case $d=3$. We recall that the $p$-capacity of a set $D\subset \R^d$ is defined as
$${\rm Cap}_p (D):= \inf\big\{ \| f \|_{W^{1,p}(\R^{d})}^p  : f\in W^{1,p}(\R^d), \  D\subset \{f\geq 1\}\big\}.$$
If $1<p<d$, it is known that the $p$-capacity of a ball has the following estimate (see for example \cite{GS99} or \cite{Poggesi18}): let $B(0,r)\subset \R^d$ be a ball with radius $r \in (0,1)$, then
$$
{\rm Cap}_p (B(0,r)) = C r^{d-p}, \quad \mbox{for some $C=C(p,d)$ indepedent of $r$.}
$$
Since the $p$-capacity is an outer measure, we have the following estimate for the $p$-capacity of the union of all the holes in $D_{\e}$:
\be\label{cap-holes}
{\rm Cap}_p\,\big(\bigcup_{k\in K_\e} T_{\e,k}\big)\leq \sum_{k\in K_\e} {\rm Cap}_p\,(T_{\e,k}) = C \e^{-3}\e^{\alpha(3-p)} = C \e^{\alpha(3-p)-3},
\ee
where we see the same essential quantity $\alpha(3-p)-3$ as in Theorem \ref{thm-res} or  Corollary \ref{thm-bog}. Here in \eqref{cap-holes} we only have an inequality, due to the fact that $p$-capacity does not have additivity property. So this estimate may be rough. 

\medskip

For such a coincidence between the estimate of the restriction operator or the Bogovskii type operator and the estimate of the capacity of the holes, we give a possible explanation in the following. Let  $u \in W_{0}^{1,p}(D_{\e})$. Let $\varphi \in C_{c}^{\infty}(\R^{d})$ satisfying $\varphi = 1$ in $D$. Then we have that $\varphi( 1 - u)$
is an admissible function for the definition of $p$-capacity of the union of all the holes in $D_{\e}$, that is
$$
\varphi(1-u) \in W^{1,p}(\R^{d}), \quad \varphi(1-u) = 1 \quad \mbox{on} \quad \bigcup_{k\in K_\e} \overline T_{\e,k}.
$$
Direct calculation gives
$$
\| \varphi(1-u) \|_{ W^{1,p}(\R^{d})} \leq C(\varphi)  \|u\|_{ W^{1,p}(D_{\e})},
$$
where $C(\varphi)$ is a constant depending on $\varphi$. If $\a$ is large, which means the holes are relatively well separated, the estimate \eqref{cap-holes} could be close to an equality.  Due to the fact that
$$
C(\varphi)  \|u\|_{ W^{1,p}(D_{\e})}  \geq \| \varphi(1-u) \|_{ W^{1,p}(\R^{d})} \geq {\rm Cap}_p\,\big(\bigcup_{k\in K_\e} T_{\e,k}\big),
$$
we may expect
$$
 \|u\|^{p}_{ W^{1,p}(D_{\e})}  \geq C^{-1} \e^{\alpha(3-p)-3}.
$$

\medskip

Back the restriction operator and the Bogovskii type operator obtained in Theorem \ref{thm-res} and Corollary \ref{thm-bog}, after applying the operators, we have that $R_{\e}(\vu)$ and $\CalB_\e(f)$ are both in $W_{0}^{1,p}(D_{\e})$.  Then the optimal general estimate of the operator norm may be of size $1 + \e^{\frac{(3-p)\a-3}{p}}$.

While, here we did not give a lower bound estimate for the norm of the Bogovskii type operator. We think this could be an interesting problem to think about.

\end{remark}

\begin{remark}
The existence of such an operator satisfying $\eqref{pt-bog}_1$ is classical, known as Bogovskii's operator. However, the norm of the classical Bogovskii's operator depends on the Lipschitz character of the domain, while the Lipschtiz norm of domain $D_\e$ is unbounded as $\e\to 0$. In Theorem \ref{thm-bog} we give a quantitative estimate for the operator norm as shown in $\eqref{pt-bog}_2$. In particular, if the holes are suitably small such that $(3-p)\a-3\geq0$, one has uniform estimate for this Bogovskii type operator. In \cite{DFL}, another construction of a Bogovskii type operator on perforated domains is given and the estimates for these two different constructions are the same. However, these two constructions contain different local properties and may have special advantages in different occasions.
\end{remark}

\begin{remark}
We are considering more applications of this fundamental estimate in Theorem \ref{thm-sto} and this generalized restriction operator in Theorem \ref{thm-res}, such as the homogenization of non-Newtonian flows when $\a>1$ (for which $p$--Stokes equations under the case $\a=1$ is considered by Mikeli\'c \cite{Book-Homo}) and uniform $W^{1,p}$ estimates of elliptic equations in perorated domains with $\a>1$ (motivated by the study of Masmoudi in \cite{Masmoudi-CRM} for $\a=1$).
\end{remark}

\section{Proof of Theorem \ref{thm-sto}}\label{sec:sto-s}

This section is devoted to the proof of Theorem \ref{thm-sto}.

\subsection{Reformulation}\label{sec:resc}

We reformulate the Dirichlet problems \eqref{2} by introducing the following Dirichlet problem in the rescaled domain $\wto_\e:=\O_\e/\e=(\O/\e)\setminus \overline T$:
\be\label{2-1}
\left\{\begin{aligned}
-\Delta \vv_1+\nabla \pi_1 &=\dive \bg_1,\quad &&\mbox{in}~\wto_\e,\\
\dive \vv_1&=0 ,\quad &&\mbox{in}~\wto_\e,\\
\vv_1&=0,\quad &&\mbox{on} ~\d\wto_\e.
\end{aligned}\right.
\ee

By scaling technique, it can be shown that to prove Theorem \ref{thm-sto}, it is equivalent to prove the following theorem:
\begin{theorem}\label{thm-sto-s} For any $d'<p<d$ if $d\geq 3$ or $p=2$ if $d=2$, and any $\bg_1\in L^p(\wto_\e;\R^{d\times d})$, the unique solution $(\vv_1,\pi_1) \in W_0^{1,p}(\wto_\e;\R^d)\times L_0^{p}(\wto_\e)$ to \eqref{2-1} satisfies the uniform estimate
\be\label{est-sto-s}
\|\nabla \vv_1 \|_{L^p(\wto_\e;\R^{d\times d})}+ \|\pi_1\|_{L^p(\wto_\e) } \leq C \, \|\bg_1\|_{L^p(\wto_\e;\R^{d\times d})}
\ee
for some constant $C=C(p,d,\O,T)$ independent of $\e$.

\end{theorem}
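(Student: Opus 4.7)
The plan is to reduce the rescaled problem \eqref{2-1} to two Stokes problems on \emph{fixed} reference domains by a cut-off decomposition with a Bogovskii correction that preserves the divergence-free constraint, and then to absorb the remaining lower-order local terms by a compactness argument relating the situation to the exterior Stokes system in $\R^d\setminus\overline T$.

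I would fix once and for all $\eta\in C_c^\infty(\R^d)$ with $\eta\equiv 1$ on $\overline B_1\supset\overline T$ and $\supp\eta\subset B_2$, and set $A:=B_2\setminus\overline B_1$. Since $\vv_1$ is divergence-free and vanishes on $\partial\wto_\e$, the function $\psi:=\nabla\eta\cdot\vv_1$ is supported in $A$ and satisfies $\int_A\psi\,\dx=0$. Bogovskii's operator on the \emph{fixed} Lipschitz annulus $A$ then yields $\mathbf{b}\in W_0^{1,p}(A;\R^d)$ (extended by zero) with $\dive\mathbf{b}=\psi$ and $\|\nabla\mathbf{b}\|_{L^p(\R^d)}\le C\|\vv_1\|_{L^p(A)}$, where $C$ is independent of $\e$ because $A$ is fixed. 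Writing
$$
\vv_1=\vv^{(1)}+\vv^{(2)},\qquad \vv^{(1)}:=\eta\vv_1-\mathbf{b},\qquad \vv^{(2)}:=(1-\eta)\vv_1+\mathbf{b},
$$
both fields are divergence-free: $\vv^{(1)}$ is supported in the fixed $C^1$ domain $B_2\setminus\overline T$, and $\vv^{(2)}$ vanishes on $B_1\supset T$ and hence, extended by zero, lies in $W_0^{1,p}(\Omega/\e;\R^d)$.

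A direct calculation shows that $(\vv^{(1)},\eta\pi_1)$ and $(\vv^{(2)},(1-\eta)\pi_1)$ solve Dirichlet Stokes systems on $B_2\setminus\overline T$ and on $\Omega/\e$, respectively, with right-hand sides of the form $\dive(\eta\bg_1-2\vv_1\otimes\nabla\eta+\nabla\mathbf{b})+\vc{f}_1$ and an analogous expression involving $\vc{f}_2$, where $\vc{f}_1,\vc{f}_2$ are supported in $A$ and linear in $(\vv_1|_A,\pi_1|_A,\bg_1|_A)$ through derivatives of $\eta$. Applying Remark \ref{rem:stokes}({\bf i}) in the fixed $C^1$ domain $B_2\setminus\overline T$ to the interior part, and (after the exact rescaling $x\mapsto \e x$, which preserves the Stokes estimate constant by the scaling computation of Section \ref{sec:resc}) in the fixed $C^1$ domain $\Omega$ to the exterior part, produces
$$
\|\nabla \vv_1\|_{L^p(\wto_\e)}+\|\pi_1\|_{L^p(\wto_\e)}\le C\|\bg_1\|_{L^p(\wto_\e)}+C\bigl(\|\vv_1\|_{L^p(A)}+\|\pi_1-\overline{\pi_1}\|_{L^p(A)}\bigr),
$$
with $\overline{\pi_1}$ the mean of $\pi_1$ over $A$ and $C$ independent of $\e$.

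The hard part is the final absorption: controlling $\|\vv_1\|_{L^p(A)}+\|\pi_1-\overline{\pi_1}\|_{L^p(A)}$ by $\|\bg_1\|_{L^p(\wto_\e)}$ uniformly in $\e$. This is exactly where the hypothesis $d'<p<d$ enters, and where the abstract Lemma \ref{lem:lap-Be} is expected to replace the explicit Green-function computation from \cite{Lu-Lap}. I would proceed by compactness/contradiction: if the local estimate failed there would exist $\e_n\to 0$ and sources with $\|\bg_{1,n}\|_{L^p}\to 0$ whose solutions, renormalized so that $\|\vv_{1,n}\|_{L^p(A)}+\|\pi_{1,n}-\overline{\pi_{1,n}}\|_{L^p(A)}=1$, remain bounded on every compact subset of $\R^d\setminus\overline T$ by the displayed inequality together with interior Stokes regularity. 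Passing to a subsequence would produce a nontrivial limit $(\vv_\infty,\pi_\infty)$ solving the homogeneous Dirichlet Stokes system in the exterior domain $\R^d\setminus\overline T$ with $\nabla\vv_\infty\in L^p(\R^d\setminus\overline T)$ and $\pi_\infty\in L^p(\R^d\setminus\overline T)$. In the range $d'<p<d$ the only such solution is zero, yielding the required contradiction; outside this range nontrivial exterior solutions of the correct regularity do exist, which is in fact the mechanism driving the counterexamples of Theorem \ref{thm-sto1}.
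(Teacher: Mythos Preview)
Your overall architecture---cut-off decomposition, Bogovskii correction to restore the divergence-free condition, reduction to Stokes problems on the fixed domains $B_2\setminus\overline T$ and $\Omega$, and a compactness argument landing on the exterior Stokes system in $\R^d\setminus\overline T$---is exactly the paper's strategy. The choice of annulus and where you apply Bogovskii are cosmetic variants.

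There is, however, a genuine gap in how you obtain the displayed a~priori inequality. The exterior piece $(\vv^{(2)},(1-\eta)\pi_1)$ solves a Stokes problem on the \emph{enlarging} domain $\Omega/\e$ whose source has two parts: a divergence-form part and the compactly supported term $\vc f_2$ (containing $\pi_1\nabla\eta$, $(\Delta\eta)\vv_1$, etc.). The scaling computation of Section~\ref{sec:resc} that you invoke preserves the estimate constant only for the divergence-form part; it does \emph{not} apply to $\vc f_2$. After rescaling to $\Omega$, the source becomes $\e^{-2}\vc f_2(\cdot/\e)$, and the naive bound $\|\e^{-2}\vc f_2(\cdot/\e)\|_{W^{-1,p}(\Omega)}\le C\e^{-2+d/p}\|\vc f_2\|_{L^p}$ leaves you with an extra factor $\e^{-1}$ after undoing the scaling. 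This is precisely what Lemma~\ref{lem:lap-Be} fixes: using that $\vc f_2(\cdot/\e)$ is supported in $B_{2\e}$ together with the Sobolev embedding for $p'<d$ (equivalently $p>d'$), one gains an additional factor $\e$ in the $W^{-1,p}(\Omega)$ norm. So Lemma~\ref{lem:lap-Be}---and with it the hypothesis $p>d'$---is needed to \emph{derive} your displayed inequality, not in the absorption step where you place it. The condition $p<d$, by contrast, enters only in the compactness argument (to get $\tilde\vv_k\in L^{p^*}$ and hence the limit in $D_0^{1,p}(\R^d\setminus\overline T)$).

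A second, smaller issue: your lower-order pressure term is $\|\pi_1-\overline{\pi_1}\|_{L^p(A)}$. In the contradiction step you need this quantity to converge \emph{strongly} along the normalized sequence, but a bound $\|\pi_{1,n}\|_{L^p}\le C$ gives only weak $L^p$ convergence. The paper circumvents this by measuring the pressure in $W^{-1,p}(B_1\setminus\overline T)$, so that the compact embedding $L^p\hookrightarrow W^{-1,p}$ on a fixed bounded set supplies the required strong convergence. If you insist on the $L^p$ norm, you would need an extra interior-regularity argument for $\pi_1$ (e.g.\ via $\Delta\pi_1=\dive\dive\bg_1$ and compactness of bounded families of harmonic functions), which you do not indicate.
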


We proceed several steps to see the equivalence between Theorem \ref{thm-sto} and Theorem \ref{thm-sto-s}. We first prove Theorem \ref{thm-sto} by supposing that Theorem \ref{thm-sto-s} holds true. Let $\bg\in L^p(\O_\e;\R^{d\times d})$ with $d'<p<d$ if $d\geq 3$ and $p=2$ if $d=2$, and $(\vv,\pi)\in W_0^{1,p} (\O_\e;\R^d)\times L_0^{p} (\O_\e)$ be the unique solution to \eqref{2}. Thus, the rescaled quantities
\be\label{def-u-s}
\vv_1(\cdot):= \vv(\e \cdot),\quad \pi_1(\cdot ):= \e\, \pi(\e \cdot) \quad \bg_1(\cdot ):= \e\, \bg(\e \cdot)
\ee
satisfies \eqref{2-1}. By Theorem \ref{thm-sto-s}, there holds the uniform estimate
$$
\|\nabla \vv_1 \|_{L^p(\wto_\e;\R^{d\times d})}+ \|\pi_1\|_{L^p(\wto_\e) } \leq C \, \|\bg_1\|_{L^p(\wto_\e;\R^{d\times d})}
$$
which is equivalent to
$$
\|\nabla \vv \|_{L^p(\O_\e;\R^{d\times d})}+ \|\pi\|_{L^p(\O_\e) } \leq C \, \|\bg\|_{L^p(\O_\e;\R^{d\times d})}.
$$
This is exactly our desired estimate \eqref{est-sto}.

 The other direction of the equivalence can be proved similarly. Hence, to prove Theorem \ref{thm-sto}, it is sufficient to prove Theorem \ref{thm-sto-s}. The rest of this section is devoted to proving Theorem \ref{thm-sto-s}. Let $\bg_1$ and $(\vv_1,\pi_1)$ be as in Theorem \ref{thm-sto-s}. The goal is to show the uniform estimate \eqref{est-sto-s}. The proof for the case $p=2$ is straightforward,  so in the sequel we focus on the case $d'<p<d,\ d\geq 3$.

\subsection{Change of variables}\label{sec:change-var}
 Inspired by the idea in \cite{KS} for the study of the Stokes equations in exterior domains, we introduce the cut-off function
\be\label{cut-off}
\vp\in C_c^\infty (B_1),\quad \vp\equiv 1 \ \mbox{in}\  \overline B_{1/2}, \quad 0\leq \vp\leq 1,
\ee
and consider the decomposition
\ba\label{def-v23}
&\vv_1=\vv_2+\vv_3,\quad \vv_2:=\vp\, \vv_1,\quad \vv_3:=(1-\vp) \vv_1, \\
&\pi_1=\pi_2+\pi_3,\quad \pi_2:=\vp\, \pi_1-\langle \vp\pi_1 \rangle ,\quad \pi_3:=(1-\vp) \pi_1-\langle (1-\vp) \pi_1 \rangle,\nn
\ea
where the notation $\left<\cdot \right>$ denotes the mean value:
\be\label{def-vp-pi}
\langle \vp\pi_1 \rangle:=\frac{1}{|\wto_\e|}\int_{\wto_\e} (\vp\pi_1)(x)\, \dx,\quad \langle (1-\vp)\pi_1 \rangle:=\frac{1}{|\wto_\e|}\int_{\wto_\e} \big((1-\vp)\pi_1\big)(x)\, \dx\nn
\ee
which satisfy
$$
\langle \vp\pi_1 \rangle+\langle (1-\vp)\pi_1 \rangle =\langle \pi_1 \rangle=0.
$$

Then $(\vv_2,\pi_2)$ can be viewed as the solution of the following Dirichlet problem in a
bounded domain:
\be\label{eq-v2}
\left\{\begin{aligned}
-\Delta \vv_2+\nabla \pi_2 &=\dive (\vp\bg_1)-\hh_1,\quad &&\mbox{in}~B_1\setminus \overline T,\\
\dive \vv_2&=\vv_1\cdot \nabla\vp ,\quad &&\mbox{in}~B_1\setminus \overline T,\\
\vv_2&=0,\quad &&\mbox{on} ~\d B_1\cup \d T
\end{aligned}\right.\nn
\ee
and $(\vv_3,\pi_3)$ can be seen as the solution of the following Dirichlet problem
in a pure rescaled domain without hole:
\be\label{eq-v3}
\left\{\begin{aligned}
-\Delta \vv_3+\nabla \pi_3 &=\dive \big((1-\vp)\bg_1\big)+\hh_1,\quad &&\mbox{in}~\O/\e,\\
\dive \vv_3&=-\vv_1\cdot \nabla\vp ,\quad &&\mbox{in}~\O/\e,\\
\vv_3&=0,\quad &&\mbox{on} ~\d \O/\e,
\end{aligned}\right.\nn
\ee
where
\be\label{def-h1}
\hh_1:=\vv_1 \Delta \vp + 2 \nabla \vv_1 \nabla \vp + \bg_1 \nabla \vp-\pi_1 \nabla\vp.
\ee

However the new variables $\vv_2$ and $\vv_3$ are no longer divergence free. This can be fixed by employing the classical Bogovskii's operator which says that for any bounded  Lipschitz domain $\O\in \R^d$, there exists a linear operator ${\cal B}_\O$ from $L_0^p(\O)$ to $W_0^{1,p}(\O;\R^d)$ such that for any $f\in L_0^p(\O)$:
$$
\dive \mathcal{B}_\O (f) =f \quad \mbox{in} \ \O,\quad \|\mathcal{B}_\O (f)\|_{W_0^{1,p}(\O;\R^d)} \leq C (d,p,\O)\, \|f\|_{L^p(\O)},
$$
where the constant $C$ depends only on $p,\ d$ and the Lipschitz character of $\O$.  For the existence of such an operator, we refer to Chapter III of Galdi's book \cite{Galdi-book}.

By the property of $\vp$ in \eqref{cut-off} and the property $\dive \vv_1=0$, we have $\supp\,(\vv_1\cdot \nabla\vp) \subset B_1\setminus \overline T$ and
\ba\label{pt-bog-0}
& \int_{B_1\setminus \overline T}\vv_1\cdot \nabla\vp\,\dx = \int_{B_1\setminus \overline T}\dive(\vv_1 \vp)\,\dx= \int_{B_1}\dive(\vv_1 \vp)\,\dx =0.\nn
\ea
By using the classical Bogovskii's operator, there exists $\vv_4\in W_0^{1,p}(B_1\setminus \overline T;\R^d)$ such that
\be\label{est-v4}
\dive \vv_4=\vv_1\cdot \nabla\vp, \quad \|\vv_4\|_{W_0^{1,p}(B_1\setminus \overline T;\R^d)} \leq C\,\|\vv_1\cdot \nabla\vp\|_{L^p(B_1\setminus \overline T)}.
\ee

We then define
\be\label{def-v56}
\vv_5:=\vv_2-\vv_4,\quad \vv_6:=\vv_3+\vv_4,
\ee
which are both divergence free and solve respectively
\be\label{eq-v5}
\left\{\begin{aligned}
-\Delta \vv_5+\nabla \pi_2 &=\dive (\vp\bg_1+\nabla \vv_4)-\hh_1,\quad &&\mbox{in}~B_1\setminus \overline T,\\
\dive \vv_5&=0 ,\quad &&\mbox{in}~B_1\setminus \overline T,\\
\vv_5&=0,\quad &&\mbox{on} ~\d B_1\cup \d T
\end{aligned}\right.
\ee
and
\be\label{eq-v6}
\left\{\begin{aligned}
-\Delta \vv_6+\nabla \pi_3 &=\dive \big((1-\vp)\bg_1-\nabla \vv_4\big)+\hh_1,\quad &&\mbox{in}~\O/\e,\\
\dive \vv_6&=0,\quad &&\mbox{in}~\O/\e,\\
\vv_6&=0,\quad &&\mbox{on} ~\d \O/\e.
\end{aligned}\right.
\ee

\subsection{Dirichlet problem in bounded domain}

For the Dirichlet problem \eqref{eq-v5}, the classical theory (see Remark \ref{rem:stokes}) implies that the unique solution $(\vv_5,\pi_2)$ satisfies
\ba\label{est-v5}
&\|\nabla \vv_5\|_{L^p(B_1\setminus \overline T;\R^{d\times d})}+\|\pi_2\|_{L^p(B_1\setminus \overline T)}\\
&\quad\leq C \, \| \dive (\vp\bg_1+\nabla \vv_4)-\hh_1 \|_{W^{-1,p}(B_1\setminus \overline T;\R^d)}\\
&\quad \leq C \, \left(\| \vp\bg_1+\nabla \vv_4\|_{L^p(B_1\setminus \overline T;\R^{d\times d})}+\|\hh_1 \|_{W^{-1,p}(B_1\setminus \overline T;\R^d)}\right)\\
&\quad \leq C \, \left(\| \bg_1\|_{L^p(B_1\setminus \overline T;\R^{d\times d})}+\|\vv_1\|_{L^p(B_1\setminus \overline T;\R^{d})}+\|\pi_1 \|_{W^{-1,p}(B_1\setminus \overline T}\right),\nn
\ea
where in the last inequality we used \eqref{def-h1} and \eqref{est-v4}. Moreover, by \eqref{est-v4} and \eqref{def-v56}, we obtain
\ba\label{est-v2}
&\|\nabla \vv_2\|_{L^p(B_1\setminus \overline T;\R^{d\times d})}+\|\pi_2\|_{L^p(B_1\setminus \overline T)} \\
&\quad \leq C \, \left(\| \bg_1\|_{L^p(B_1\setminus \overline T;\R^{d\times d})}+\|\vv_1\|_{L^p(B_1\setminus \overline T;\R^{d})}+\|\pi_1 \|_{W^{-1,p}(B_1\setminus \overline T)}\right).
\ea

\subsection{Dirichlet problem in enlarging domain}

To study \eqref{eq-v6}, by observing that the first term of the right-hand side of $\eqref{eq-v6}_1$ is of divergence form and the second term is of compact support in $B_1\setminus \overline T$, we consider the decomposition $$\vv_6:=\vv_7+\vv_8,\quad \pi_3:=\pi_4+\pi_5,$$ where the two couples $(\vv_7,\pi_4)$ and $(\vv_8,\pi_5)$ in $W_0^{1,p}(\O/\e)\times L^p_0(\O/\e)$ solve respectively
\be\label{eq-v7}
\left\{\begin{aligned}
-\Delta \vv_7+\nabla \pi_4 &=\dive \big((1-\vp)\bg_1-\nabla \vv_4\big),\quad &&\mbox{in}~\O/\e,\\
\dive \vv_7&=0,\quad &&\mbox{in}~\O/\e,\\
\vv_7&=0,\quad &&\mbox{on} ~\d \O/\e,
\end{aligned}\right.
\ee
and
\be\label{eq-v8}
\left\{\begin{aligned}
-\Delta \vv_8+\nabla \pi_5 &=\hh_1,\quad &&\mbox{in}~\O/\e,\\
\dive \vv_8&=0,\quad &&\mbox{in}~\O/\e,\\
\vv_8&=0,\quad &&\mbox{on} ~\d \O/\e.
\end{aligned}\right.
\ee

\subsubsection{Dirichlet problem in enlarging domain with divergence form source term}
The Dirichlet problem \eqref{eq-v7} has a divergence form source term. We thus consider the following change of variable:
\be\label{def-v9}
\vv_9(\cdot):=\vv_7(\cdot/\e),\quad \pi_6(\cdot)=\e^{-1} \pi_4(\cdot/\e),\quad \bg_2(\cdot):=\e^{-1} \big((1-\vp)\bg_1-\nabla \vv_4\big)(\cdot/\e).\nn
\ee
Then $(\vv_9,\pi_6)\in W_0^{1,p}(\O;\R^d)\times L_0^p(\O)$ solves
\be\label{eq-v9}
\left\{\begin{aligned}
-\Delta \vv_9+\nabla \pi_6 &=\dive \bg_2,\quad &&\mbox{in}~\O,\\
\dive \vv_9&=0,\quad &&\mbox{in}~\O,\\
\vv_9&=0,\quad &&\mbox{on} ~\d \O.
\end{aligned}\right.\nn
\ee
We employ the known results (see Remark \ref{rem:stokes}) to obtain
\be\label{est-v9}
\|\nabla \vv_9\|_{L^p(\O;\R^{d\times d})}+\|\pi_6\|_{L^p(\O)} \leq C \, \| \bg_2\|_{L^p(\O;\R^{d\times d})},\nn
\ee
which is equivalent to
\be\label{est-v7}
\|\nabla \vv_7\|_{L^p(\O/\e;\R^{d\times d})}+\|\pi_4\|_{L^p(\O/\e)} \leq C \, \|(1-\vp)\bg_1-\nabla \vv_4 \|_{L^p(\O/\e;\R^{d\times d})}.
\ee

\subsubsection{Dirichlet problem in enlarging domain with compactly supported source term}
For Dirichlet problem \eqref{eq-v8}, by the property of $\vp$ in \eqref{cut-off}, we have
\be\label{supp-h}
{\rm supp}\,\hh_1 \subset (B_1\setminus \overline B_{1/2})\subset (B_1\setminus \overline T).\nn
\ee

We introduce the following lemma concerning the Dirichlet problems for Stokes equations in enlarging domains with compactly supported source terms, which may be of independent interest.
\begin{lemma}\label{lem:lap-Be}
Let $0<\e<1$ be a sufficient small parameter, $p>d'$ with $d\geq 3$ and $\bgg\in W^{-1,p}(\R^d;\R^d)$ be of compact support and its support is independent of $\e$. Let $\Omega$ be a bounded $C^1$ domain in $\R^d$. Then the unique solution $(\ww,\xi) \in W_0^{1,p}(\O/\e;\R^{d})\times L_0^{p}(\O/\e)$ to the following Dirichlet problem
\be\label{eq-w-s}
\left\{\begin{aligned}
-\Delta \ww+\nabla \xi &=\bgg,\quad &&\mbox{in}~\O/\e,\\
\dive \ww&=0,\quad &&\mbox{in}~\O/\e,\\
\ww&=0,\quad &&\mbox{on} ~\d \O/\e
\end{aligned}\right.\nn
\ee
satisfies
\be\label{est-w-s}
\|\nabla \ww\|_{L^p(\O/\e;\R^{d\times d})}+\|\xi\|_{L^p(\O/\e)} \leq C \, \|\bgg\|_{W^{-1,p}(\R^d;\R^{d})}
\ee
for some constant $C=C(p,d,\O)$ independent of $\e$.
\end{lemma}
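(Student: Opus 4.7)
The strategy is to write $\ww = \ww_\infty + U$, where $\ww_\infty$ is the whole-space Stokes solution with source $\bgg$, and to control the correction $U$ by rescaling it back to the fixed $C^1$ domain $\Omega$. Since $\partial(\Omega/\e)$ recedes to infinity while $\supp \bgg$ stays fixed, the correction only measures the discrepancy between ``decay at infinity'' and ``homogeneous Dirichlet on $\partial(\Omega/\e)$''.

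The first step is to invoke the classical $\RR^d$ theory (e.g.\ Chapter IV of Galdi's book): for $\bgg \in W^{-1,p}(\RR^d;\RR^d)$ compactly supported with $d' < p$, $d \geq 3$, there exists a solution $(\ww_\infty,\xi_\infty)$ with $\nabla \ww_\infty \in L^p(\RR^d)$ and $\xi_\infty \in L^p(\RR^d)$ satisfying
\[
\|\nabla \ww_\infty\|_{L^p(\RR^d)} + \|\xi_\infty\|_{L^p(\RR^d)} \le C\|\bgg\|_{W^{-1,p}(\RR^d)}.
\]
Convolving with the Stokes fundamental solution and using compactness of $\supp \bgg$ gives the pointwise decay $|\ww_\infty(x)| \lesssim |x|^{2-d}$ and $|\nabla \ww_\infty(x)| + |\xi_\infty(x)| \lesssim |x|^{1-d}$ for $|x|$ large, with implicit constants controlled by $\|\bgg\|_{W^{-1,p}(\RR^d)}$. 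The assumption $p > d'$ is precisely what makes $|x|^{1-d}$ lie in $L^p$ near infinity, consistently with the integrability just stated.

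Next I set $U := \ww - \ww_\infty|_{\O/\e}$ and, with $c_\e := |\O/\e|^{-1}\int_{\O/\e}\xi_\infty\,dx$, $\Xi := \xi - \xi_\infty + c_\e \in L^p_0(\O/\e)$. Then $(U,\Xi)$ solves the homogeneous Stokes problem in $\O/\e$ with boundary data $U = -\ww_\infty$ on $\partial(\O/\e)$, which is divergence-compatible because $\dive \ww_\infty = 0$. Rescaling via $V(y) := U(y/\e)$, $\Pi(y) := \e\,\Xi(y/\e)$ converts this into a Stokes problem on the \emph{fixed} $C^1$ domain $\O$ with zero source and boundary data $b(y) := -\ww_\infty(y/\e)$. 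Applying the classical nonhomogeneous $L^p$ estimate on $\O$ (constant depending only on $p$ and $\O$), and using the decay of $\ww_\infty$ together with $\mathrm{dist}(\partial \O,0) \ge c_0 > 0$, I get
\[
\|\nabla V\|_{L^p(\O)} + \|\Pi\|_{L^p(\O)} \le C\|b\|_{W^{1-1/p,p}(\partial \O)} \le C\e^{d-2}\|\bgg\|_{W^{-1,p}(\RR^d)}.
\]

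Undoing the scaling gives $\|\nabla U\|_{L^p(\O/\e)} + \|\Xi\|_{L^p(\O/\e)} = \e^{1-d/p}\bigl(\|\nabla V\|_{L^p(\O)} + \|\Pi\|_{L^p(\O)}\bigr) \le C\e^{d-1-d/p}\|\bgg\|_{W^{-1,p}(\RR^d)}$. The hypothesis $p > d' = d/(d-1)$ makes the exponent $d-1-d/p$ strictly positive, so this correction is uniformly bounded (and in fact tends to zero) as $\e \to 0$. Combining this with the whole-space bound $\|\nabla \ww_\infty\|_{L^p(\RR^d)} + \|\xi_\infty\|_{L^p(\RR^d)} \le C\|\bgg\|_{W^{-1,p}}$ and with $|c_\e|\cdot|\O/\e|^{1/p} \le \|\xi_\infty\|_{L^p(\RR^d)}$ (H\"older on $\O/\e$) yields \eqref{est-w-s} by the triangle inequality. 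The main technical obstacle I foresee is the first step: producing a well-behaved whole-space solution with the correct $L^p$ gradient control \emph{and} the pointwise decay with the right rate simultaneously; once this is in hand, the remaining scaling argument is essentially bookkeeping.
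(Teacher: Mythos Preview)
Your approach is correct and genuinely different from the paper's. The paper does \emph{not} introduce a whole-space solution; instead it rescales the entire problem to the fixed domain $\Omega$ via $\ww_1(\cdot)=\ww(\cdot/\e)$, $\xi_1(\cdot)=\e^{-1}\xi(\cdot/\e)$, obtaining a Stokes problem in $\Omega$ with source $\e^{-2}\bgg(\cdot/\e)$. The crux is then a duality computation showing $\|\bgg(\cdot/\e)\|_{W^{-1,p}(\Omega)}\le C\e^{1+d/p}\|\bgg\|_{W^{-1,p}(\R^d)}$: one pairs against a test function $\psi$, inserts a cutoff $\chi$ on $\supp\bgg$, and estimates $\|\chi(\cdot)\psi(\e\cdot)\|_{W^{1,p'}}$ via H\"older and the Sobolev embedding $W^{1,p'}_0\hookrightarrow L^{(p')^*}$ (this is where $p>d'$, i.e.\ $p'<d$, enters). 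The factor $\e^{-2}\cdot\e^{1+d/p}=\e^{-1+d/p}$ exactly cancels the Jacobian when undoing the scaling. This route is shorter and self-contained: it uses only the fixed-domain $W^{-1,p}$ estimate of Remark~\ref{rem:stokes} and avoids the whole-space theory and pointwise decay of the fundamental solution that you yourself flag as the ``main technical obstacle.'' Your argument, by contrast, is closer in spirit to matched asymptotics and yields the extra information that the boundary correction $U$ vanishes at rate $\e^{d-1-d/p}$; it also uses the \emph{nonhomogeneous} Dirichlet trace estimate on $\Omega$, which is a mildly stronger classical input than what the paper invokes.

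One small slip: your pressure rescaling should be $\Pi(y)=\e^{-1}\Xi(y/\e)$, not $\e\,\Xi(y/\e)$, in order for $(V,\Pi)$ to solve the homogeneous Stokes system in $\Omega$; with the correct scaling your stated relation $\|\nabla U\|_{L^p(\Omega/\e)}+\|\Xi\|_{L^p(\Omega/\e)}=\e^{1-d/p}(\|\nabla V\|_{L^p(\Omega)}+\|\Pi\|_{L^p(\Omega)})$ is indeed right.
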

\begin{proof}[Proof of {\rm Lemma \ref{lem:lap-Be}}]  Since $\bgg\in W^{-1,p}(\R^d;\R^d)$ is of compact support and its support is independent of $\e$, without loss of generality, we  may assume that ${\rm supp\,} \bgg \subset B_1.$

 We introduce the changes of variables
\be\label{def-w2}
\ww_1(\cdot)=\ww(\cdot/\e),\quad \xi_1(\cdot)=  \e^{-1}\xi( \cdot/\e) \quad \bgg_1(\cdot)=  \bgg (\cdot/\e)
\ee
which satisfies
\be\label{eq-w2}
\left\{\begin{aligned}
-\Delta \ww_1 + \nabla \xi_1 &=  \e^{-2}\bgg_1,\quad &&\mbox{in}~\O,\\
\dive \ww_1&=0,\quad &&\mbox{in}~\O,\\
\ww_1&=0,\quad &&\mbox{on} ~\d \O.
\end{aligned}\right.\nn
\ee

Then classical results (see Remark \ref{rem:stokes}) imply
\be\label{est-w2}
\|\nabla \ww_1\|_{L^p(\O;\R^{d\times d})} + \| \xi_1 \|_{L^{p}(\O)}\leq C\, \e^{-2}\, \| \bgg_1 \|_{W^{-1,p}(\O;\R^d)},
\ee
for some constant $C=C(p,d,\O)$ independent of $\e$.

Now we estimate the quantity on the right-hand side of \eqref{est-w2}. Let $\psi \in C_c^\infty (\O;\R^d)$ be any test function and $\chi \in C_c^\infty( B_1 )$ be a cut-off function such that $\chi=1$ on $\supp\, \bgg$. Then
\ba\label{est-h3}
\left|\langle \bgg_1,\psi  \rangle_{W^{-1,p}(\O), W_0^{1,p'}(\O)}\right| &= \left|\langle \bgg(\cdot/\e) ,\psi(\cdot)  \rangle_{W^{-1,p}(\O), W_0^{1,p'}(\O)} \right|\\
&=\e^d \left|\langle \bgg(\cdot) ,\psi(\e \cdot)  \rangle_{W^{-1,p}(\O/\e), W_0^{1,p'}(\O/\e)} \right|\\
&=\e^d \left|\langle \bgg(\cdot) ,\chi(\cdot)\psi(\e \cdot)  \rangle_{W^{-1,p}(\O/\e), W_0^{1,p'}(\O/\e)} \right|\\
&\leq \e^d \,\|\bgg\|_{W^{-1,p}(\R^d)} \| \chi(\cdot)\psi(\e \cdot) \|_{W^{1,p'}(\R^d)}.
\ea
We calculate
\ba\label{est-h3-1}
&\| \chi(\cdot)\psi(\e \cdot) \|_{W^{1,p'}(\R^d)}^{p'} = \| \chi(\cdot)\psi(\e \cdot) \|_{W^{1,p'}( B_1 )}^{p'}\\
&\quad \leq  C\,\left(\big\| | \chi(\cdot)+\nabla\chi(\cdot)|\psi(\e \cdot) \big\|_{L^{p'}( B_1 )}^{p'}+ \e^{p'} \| \chi(\cdot)(\nabla\psi)(\e \cdot) \|_{L^{p'}( B_1 )}^{p'}\right)\\
&\quad \leq C\, \left(\e^{-d} \| \psi(\cdot) \|_{L^{p'}(B_\e)}^{p'}+ \e^{p'} \e^{-d} \| \nabla\psi(\cdot) \|_{L^{p'}(B_\e)}^{p'}\right)\\
&\quad \leq C\, \e^{-d}\e^{p'} \| \nabla\psi(\cdot) \|_{L^{p'}(\O)}^{p'},
\ea
where in the last inequality we used H\"older inequality and Sobolev embedding:
\ba\label{est-h3-2}
\| \psi \|_{L^{p'}(B_\e)} &\leq C\, \e \, \| \psi \|_{L^{(p')^*}(B_\e)}
\leq C\, \e \, \| \psi \|_{L^{(p')^*}(\O)}\leq  C\, \e \, \| \nabla\psi \|_{L^{p'}(\O)},
\ea
where
$$
\frac{1}{(p')^*}=\frac{1}{p'}-\frac{1}{d}.
$$
In \eqref{est-h3-2} we also used the assumption $p>d'$, which indicates $p'<d$.

We combine \eqref{est-h3} and \eqref{est-h3-1} to obtain
\be\label{est-h3-3}
\left|\langle \bgg_1,\psi  \rangle_{W^{-1,p}(\O)\times W_0^{1,p'}(\O)}\right| \leq C\, \e^{1+d/p} \|\bgg\|_{W^{-1,p}(\R^d)} \| \nabla\psi\|_{L^{p'}(\O)},\nn
\ee
which implies
\be\label{est-h3-f}
\| \bgg_1\|_{W^{-1,p}(\O)}  \leq C\, \e^{1+d/p}\, \|\bgg\|_{W^{-1,p}(\R^d)}.\nn
\ee
Together with \eqref{est-w2} we deduce
\be\label{est-w2-1}
\|\nabla \ww_1\|_{L^p(\O;\R^{d\times d})} + \| \xi_1 \|_{L^{p}(\O;\R^d)}\leq C\, \e^{-1+d/p}\, \| \bgg \|_{W^{-1,p}(\R^d;\R^d)}.
\ee
By \eqref{def-w2}, we have
\be\label{est-w2-2}
\|\nabla \ww_1\|_{L^p(\O;\R^{d\times d})}= \e^{-1+d/p} \|\nabla \ww \|_{L^p(\O/\e;\R^{d\times d})},\quad \| \xi_1\|_{L^p(\O)}= \e^{-1+d/p} \| \xi \|_{L^p(\O/\e)}.
\ee

The estimates in \eqref{est-w2-1} and \eqref{est-w2-2} imply \eqref{est-w-s} and we complete the proof of Lemma \ref{lem:lap-Be}.

\end{proof}

We now apply Lemma \ref{lem:lap-Be} to Dirichlet problem \eqref{eq-v8} to obtain:
\be\label{est-v8}
\|\nabla \vv_8\|_{L^p(\O/\e;\R^{d\times d})}+\|\pi_5\|_{L^p(\O/\e)} \leq C \, \|\hh_1\|_{W^{-1,p}(\R^d;\R^{d})}.
\ee
From the above two estimates \eqref{est-v7} and \eqref{est-v8}, by \eqref{cut-off}, \eqref{def-h1} and \eqref{est-v4}, direct calculation gives
\ba\label{est-v78}
&\|\nabla \vv_7\|_{L^p(\O/\e;\R^{d\times d})}+\|\pi_4\|_{L^p(\O/\e)} \\
&\qquad \leq C \, \left(\|\bg_1 \|_{L^p(\wto_\e;\R^{d\times d})}+ \|\vv_1\|_{L^p(B_1\setminus \overline T;\R^d)}\right),\\
&\|\nabla \vv_8\|_{L^p(\O/\e;\R^{d\times d})}+\|\pi_5\|_{L^p(\O/\e)}\\
&\qquad \leq C \,\left( \|\bg_1 \|_{L^p(B_1\setminus \overline T;\R^{d\times d})}+  \|\vv_1\|_{L^p(B_1\setminus \overline T;\R^{d})}+\|\pi_1 \|_{W^{-1,p}(B_1\setminus \overline T)}\right).\nn
\ea
This implies
\ba\label{est-v6}
&\|\nabla \vv_6\|_{L^p(\O/\e;\R^{d\times d})}+\|\pi_3\|_{L^p(\O/\e)}\\
&\quad \leq C\, \left(\|\bg_1 \|_{L^p(\wto_\e;\R^{d\times d})}+ \|\vv_1\|_{L^p(B_1\setminus \overline T;\R^d)}+\|\pi_1 \|_{W^{-1,p}(B_1\setminus \overline T)}\right).\nn
\ea
Together with \eqref{est-v4} and \eqref{def-v56}, we finally obtain
\ba\label{est-v3}
&\|\nabla \vv_3\|_{L^p(\O/\e;\R^{d\times d})}+\|\pi_3\|_{L^p(\O/\e)} \\
&\quad\leq C \, \left(\|\bg_1 \|_{L^p(\wto_\e;\R^{d\times d})}+ \|\vv_1\|_{L^p(B_1\setminus \overline T;\R^d)}+\|\pi_1 \|_{W^{-1,p}(B_1\setminus \overline T)}\right).
\ea

\subsection{End of the proof}\label{sec:end-lap}

First of all, summing up the estimates in \eqref{est-v2} and \eqref{est-v3} implies directly the following result:
\begin{proposition}\label{prop:u1}
Let $p>d'$ with $d\geq 3$, $\bg_1\in L^p(\wto_\e;\R^{d\times d})$ and $(\vv_1,\pi_1)\in W_0^{1,p}(\wto_\e;\R^d)\times L_0^{p}(\wto_\e)$ be the unique solution to \eqref{2-1}. Then there holds
\ba\label{est-v1}
&\|\nabla \vv_1\|_{L^p(\wto_\e;\R^{d\times d})}+\|\pi_1\|_{L^p(\wto_\e)} \leq C \, \left(\|\bg_1 \|_{L^p(\wto_\e;\R^{d\times d})}+ \|\vv_1\|_{L^p(B_1\setminus \overline T;\R^d)}+\|\pi_1 \|_{W^{-1,p}(B_1\setminus \overline T)}\right)\nn
\ea
for some constant $C=C(p,d,\O,T)$ independent of $\e$.
\end{proposition}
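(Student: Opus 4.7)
The plan is to derive the claimed bound by directly combining the two estimates \eqref{est-v2} and \eqref{est-v3} already produced in the preceding subsections, exploiting the decompositions $\vv_1=\vv_2+\vv_3$ and $\pi_1=\pi_2+\pi_3$. The latter identity holds as an exact equality because $\langle\vp\pi_1\rangle+\langle(1-\vp)\pi_1\rangle=\langle\pi_1\rangle=0$. The only conceptual difficulty is that \eqref{est-v2} controls $\vv_2,\pi_2$ on the small domain $B_1\setminus\overline T$ while \eqref{est-v3} controls $\vv_3,\pi_3$ on the large domain $\O/\e$, so I need to convert both into estimates on $\wto_\e$.

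For the velocity gradient, the support property $\vp\in C_c^\infty(B_1)$ yields $\supp\nabla\vv_2\subset B_1$, hence $\|\nabla\vv_2\|_{L^p(\wto_\e)}=\|\nabla\vv_2\|_{L^p(B_1\setminus\overline T)}$. For $\vv_3=(1-\vp)\vv_1$, note that $1-\vp$ vanishes on $\overline B_{1/2}\supset\overline T$, so $\vv_3$ extends by zero across the hole to an element of $W_0^{1,p}(\O/\e;\R^d)$, giving $\|\nabla\vv_3\|_{L^p(\wto_\e)}\leq\|\nabla\vv_3\|_{L^p(\O/\e)}$. Triangle inequality together with \eqref{est-v2} and \eqref{est-v3} then bounds $\|\nabla\vv_1\|_{L^p(\wto_\e)}$ by the right-hand side claimed in the proposition.

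For the pressure I proceed in two stages, first controlling $\pi_1$ on the inner piece $B_1\setminus\overline T$, then on the outer piece $\wto_\e\setminus B_1$. On $B_1\setminus\overline T$ one has $\pi_1=\pi_2+\pi_3$, and triangle inequality together with $\|\pi_3\|_{L^p(B_1\setminus\overline T)}\leq\|\pi_3\|_{L^p(\O/\e)}$ gives
$$\|\pi_1\|_{L^p(B_1\setminus\overline T)}\leq\|\pi_2\|_{L^p(B_1\setminus\overline T)}+\|\pi_3\|_{L^p(\O/\e)},$$
which is already bounded by \eqref{est-v2} and \eqref{est-v3}. On $\wto_\e\setminus B_1$, the cut-off vanishes, so $\pi_1=\pi_3+c$ with the constant $c:=\langle(1-\vp)\pi_1\rangle$, and $\|\pi_3\|_{L^p(\wto_\e\setminus B_1)}\leq\|\pi_3\|_{L^p(\O/\e)}$ is already controlled.

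The main and only subtle point is the uniform bound on $|c|\cdot|\wto_\e\setminus B_1|^{1/p}$. Using the zero-mean constraint $\int_{\wto_\e}\pi_1=0$ one has $c=-|\wto_\e|^{-1}\int_{B_1\setminus\overline T}\vp\pi_1$, and H\"older's inequality gives
$$|c|\cdot|\wto_\e\setminus\overline B_1|^{1/p}\leq\left(\frac{|B_1|}{|\wto_\e|}\right)^{1/p'}\|\pi_1\|_{L^p(B_1\setminus\overline T)}.$$
Since $B_1\subset\O$ and $\e<1$ imply $\wto_\e\supset B_1\setminus\overline T$, the ratio $|B_1|/|\wto_\e|$ is bounded uniformly in $\e$ by a constant depending only on $\O$ and $T$ (in fact it tends to $0$ as $\e\to 0$). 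The inner-region bound on $\|\pi_1\|_{L^p(B_1\setminus\overline T)}$ just proved then closes the estimate, with no circular dependence. Summing the inner and outer contributions yields the desired control of $\|\pi_1\|_{L^p(\wto_\e)}$, completing the proof.
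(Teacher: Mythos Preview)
Your proof is correct and follows the same strategy as the paper: the paper's entire proof consists of the single sentence ``summing up the estimates in \eqref{est-v2} and \eqref{est-v3} implies directly the following result,'' and your argument is precisely this summing, carried out in detail. In particular, you have carefully handled the one subtle point the paper glosses over---that $\pi_2=\vp\pi_1-\langle\vp\pi_1\rangle$ is \emph{not} supported in $B_1$ because of the subtracted constant, so one must separately control $|c|\cdot|\wto_\e\setminus B_1|^{1/p}$ via H\"older and the zero-mean condition on $\pi_1$; your two-stage treatment (first bounding $\|\pi_1\|_{L^p(B_1\setminus\overline T)}$ by \eqref{est-v2}--\eqref{est-v3}, then feeding this into the bound on $c$) is a clean and non-circular way to do this.
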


\subsubsection{Contradiction and compactness argument}

Now we employ contradiction argument to  to prove Theorem \ref{thm-sto-s}. By contradiction we suppose there exists $p\in (d',d)$, a number sequence $\{\e_k\}_{k\in\N}\subset (0,1)$ and a function sequence $\{\bg_k\}_{k\in \N}\subset L^{p}(\wto_{\e_k};\R^{d\times d})$ such that
\be\label{ek-gk}
\e_k \to 0\ \mbox{as $k\to \infty$},\quad \|\bg_k\|_{L^{p}(\wto_{\e_k};\R^{d\times d})}\to 0 \ \mbox{as $k\to \infty$},
\ee
such that the unique solution $(\vv_k,\pi_k) \in W_0^{1,p}(\wto_{\e_k};\R^d)\times L_0^{p}(\wto_{\e_k})$ to the Dirichlet problem
 \ba\label{eq-vk}
 \left\{\begin{aligned}
-\Delta \vv_k+\nabla \pi_k &=\dive \bg_k,\quad &&\mbox{in}~\wto_{\e_k},\\
\dive \vv_k&=0 ,\quad &&\mbox{in}~\wto_{\e_k},\\
\vv_k&=0,\quad &&\mbox{on} ~\d\wto_{\e_k}
\end{aligned}\right.
\ea
satisfies
\ba\label{est-vk}
\|\nabla \vv_k\|_{L^{p}(\wto_{\e_k};\R^{d\times d})}+\|\pi_k\|_{L^{p}(\wto_{\e_k})}=1 \quad \mbox{for any $k\in \N$}.
\ea

By Proposition \ref{prop:u1}, we have
\ba\label{est-vk2}
&\|\nabla \vv_k\|_{L^{p}(\wto_{\e_k};\R^{d\times d})}+\|\pi_k\|_{L^{p}(\wto_{\e_k})}\\
& \quad\leq  C \, \left(\|\bg_k \|_{L^p(\wto_\e;\R^{d\times d})}+ \|\vv_k\|_{L^p(B_1\setminus \overline T;\R^d)}+\|\pi_k \|_{W^{-1,p}(B_1\setminus \overline T)}\right).
\ea

Since $\vv_k \in W_0^{1,p}(\wto_{\e_k};\R^d)$, the uniform estimate assumed in \eqref{est-vk} and the Sobolev embedding theorem
implies
\be\label{est-vk1}
\sup_{k\in\N} \|\vv_k\|_{L^{p*}(\wto_{\e_k};\R^d)} \leq C\,\sup_{k\in\N} \|\nabla \vv_k\|_{L^{p}(\wto_{\e_k};\R^{d\times d})}\leq C,\quad \frac{1}{p^*}:=\frac{1}{p}-\frac{1}{d},
\ee
for some constant $C=C(p,d)$ independent of $\e$. We remark that in \eqref{est-vk1} the assumption $p<d$ is used.

\medskip

We consider the zero extensions $(\tilde \vv_k,\tilde \pi_k,\tilde \bg_k)$ defined as:
\ba\label{t-vk}
&\tilde \vv_k= \vv_k \ \mbox{in}\ \wto_{\e_k},\quad \tilde \vv_k=0\  \mbox{in} \ \R^d\setminus  \wto_{\e_k},\\
&\tilde \pi_k= \pi_k \ \mbox{in}\ \wto_{\e_k},\quad \tilde \pi_k=0\  \mbox{in} \ \R^d\setminus  \wto_{\e_k},\\
&\tilde \bg_k= \bg_k \ \mbox{in}\ \wto_{\e_k},\quad \tilde \bg_k=0\  \mbox{in} \ \R^d\setminus  \wto_{\e_k}.\nn
\ea

Since $\vv_k \in W_0^{1,p}(\wto_{\e_k};\R^d)$,  we have
 \be\label{t-wk2}
 \nabla \tilde \vv_k=\nabla \vv_k \ \mbox{in}\ \wto_{\e_k},\quad \nabla \tilde \vv_k=0\  \mbox{in} \ \R^d\setminus  \wto_{\e_k}.\nn
 \ee

By \eqref{ek-gk}, \eqref{est-vk} and \eqref{est-vk1}, we have the uniform estimates for the extensions:
\ba\label{est-t-vk}
&\|\tilde \bg_k\|_{L^{p}(\R^d\setminus \overline T;\R^{d\times d})}=\| \bg_k\|_{L^{p}(\wto_{\e_k};\R^{d\times d})}\to 0,\ \mbox{as $k\to \infty$},\\
&\|\nabla \tilde \vv_k\|_{L^{p}(\R^d\setminus \overline T;\R^{d\times d})}+\|\tilde \pi_k\|_{L^{p}(\R^d\setminus \overline T)}=1,\quad\sup_{k\in\N} \|\tilde \vv_k\|_{L^{p*}(\R^d\setminus \overline T;\R^d)} \leq C.\nn
\ea
Furthermore, we have  the weak convergence up to a substraction of subsequence:
\ba\label{conv-t-vk}
&\tilde \vv_k\to \vv_\infty \ \mbox{weakly in $L^{p*}(\R^d\setminus \overline T;\R^d)$}, \quad \nabla \tilde \vv_k \to \nabla \vv_\infty \ \mbox{weakly in $L^{p}(\R^d\setminus \overline T;\R^{d\times d})$},\\
&\tilde \pi_k \to \pi_\infty \ \mbox{weakly in $L^{p}(\R^d\setminus \overline T)$}.
\ea
In the following, we will show the weak limit couple $(\vv_\infty,\pi_\infty)$ is zero.

\smallskip

Since $\dive \tilde \vv_k =0$ for any $k$, we have
\be\label{dive-tv=0}
\dive \vv_\infty =0.
\ee

For any $\psi \in C_c^\infty(\R^d\setminus \overline T;\R^d)$, there exists $k_0$ such that for all $k\geq k_0$, we have $\psi \in C_c^\infty(\wto_{\e_k};\R^d)$. Then $\psi$ is a good test function for \eqref{eq-vk} when $k\geq k_0$. This implies that for any $k\geq k_0$,
\be\label{eq-tvk1}
\int_{\wto_{\e_k}} \nabla \vv_k :\nabla \psi \,\dx - \int_{\wto_{\e_k}} \pi_k\, \dive \psi \,\dx = - \int_{\wto_{\e_k}}  \bg_k :\nabla \psi \,\dx.\nn
\ee
This is equivalent to
\be\label{eq-tvk2}
\int_{\R^d \setminus \overline T} \nabla \tilde \vv_k :\nabla \psi \,\dx - \int_{\R^d \setminus \overline T} \tilde \pi_k \,\dive \psi \,\dx = - \int_{\R^d \setminus \overline T}  \tilde \bg_k :\nabla \psi \,\dx.
\ee

By \eqref{ek-gk} and \eqref{conv-t-vk}, passing $k\to \infty$ in \eqref{eq-tvk2} implies
\be\label{eq-tvl}
\int_{\R^d \setminus \overline T} \nabla  \vv_\infty :\nabla \psi \,\dx - \int_{\R^d \setminus \overline T}  \pi_\infty \dive \psi \,\dx = 0.
\ee

Then by \eqref{dive-tv=0} and \eqref{eq-tvl}, we deduce
\be\label{eq-tvf}
 \left\{\begin{aligned}
-\Delta \vv_\infty+\nabla \pi_\infty &=0,\quad &&\mbox{in}~\R^d \setminus \overline T,\\
\dive \vv_\infty &=0 ,\quad &&\mbox{in}~\R^d \setminus \overline T.
\end{aligned}\right.
\ee

In the sequel of this section, we will show that $\vv_\infty$ belongs to some homogeneous Sobolev space that falls into the framework in \cite{KS} concerning the Stokes equations in exterior domains. Then we employ the result in \cite{KS} to deduce $\vv_\infty=0,\ \pi_{\infty}=0$.

\subsubsection{Homogeneous Sobolev spaces}\label{sec:homo-sobolev}

We now recall some concepts of the homogeneous Sobolev spaces. The materials are mainly taken from Chapter II.6 and II.7 of Galdi's book \cite{Galdi-book}. Let $1\leq q <\infty$, $\Lambda :=\R^d\setminus \bar\o$ be an exterior domain with $\o$ a bounded Lipschitz domain in $\R^d$. We define the linear space
\be\label{def-D1q}
D^{1,q}(\L)=\{u\in L_{loc}^1(\L)\,:\, | u |_{D^{1,q}(\L)} <\infty \},\quad | u |_{D^{1,q}(\L)}:=\|\nabla u\|_{L^q(\L)}.
\ee
The space $D^{1,q}$ is generally not a Banach space. However, if we introduce the equivalent classes for any $u\in D^{1,q}(\L)$:
$$
[u] = \{u +c,\ c \in \R \ \mbox{is a constant}\},
$$
the space $\dot D^{1,q}(\L)$ of all equivalence classes $[u]$  equipped with the norm
$$
\left\|[u]\right\|_{\dot D^{1,q}(\L)} := |u|_{D^{1,q}(\L)} =\|\nabla u\|_{L^q(\L)}
$$
is a Banach space.

The functional $|\cdot |_{D^{1,q}(\L)}$ introduced in \eqref{def-D1q} defines a norm in $C_c^\infty(\L)$. We introduce the Banach space $D_0^{1,q}(\L)$ which is the completion of $C_c^\infty(\L)$ with respect to the norm $|\cdot |_{D^{1,q}(\L)}$.

By Sobolev embedding theorem, if $q<d$,  for any $u\in D_0^{1,q}(\L)$ there holds  $u \in L^{q*}(\L)$ where $ q*:= \frac{dq}{d-q}$. If $q<d$, Galdi \cite[equation (II.7.14)]{Galdi-book} gave an equivalent description for $D_0^{1,q}(\L)$:
\be\label{def-tD1q}
 D_0^{1,q}(\L)=\big\{ u\in D^{1,q}(\L): \ u\in L^{q*}(\L)\  \mbox{such that $\psi u \in W_0^{1,q}(\L)$ for any $\psi \in C_c^\infty (\R^d)$}\big\},
\ee
with the equivalent norm
$$
\|\cdot \|_{D_0^{1,q}(\L)}:= | \cdot |_{D^{1,q}(\L)}+ \|\cdot \|_{L^{q*}(\L)}.
$$

\subsubsection{Derivation of contradiction}

We first prove that $\vv_\infty$ obtained in \eqref{conv-t-vk} is in the homogeneous Sobolev space $D_0^{1,p} (\R^d\setminus \overline T)$. Since $p<d$, then by  \eqref{conv-t-vk} and \eqref{def-tD1q}, it suffices to show
$$
\psi \vv_\infty \in W_0^{1,p}(\R^d\setminus \overline T ),\ \mbox{for any $\psi \in C_c^\infty (\R^d)$}.
$$
This is rather direct. Since $\tilde \vv_k$ has zero trace on $\d T$, this implies
$$
\psi \tilde \vv_k \in W_0^{1,p}(\R^d\setminus \overline T ),\ \mbox{for any $\psi \in C_c^\infty (\R^d)$}.
$$
Then  $\psi \vv_\infty $, as the weak limit of $\psi \tilde \vv_k$ in $W_0^{1,q}(\R^d\setminus \overline T )$, is necessarily in $W_0^{1,q}(\R^d\setminus \overline T )$.

\smallskip

Thus, the known result for the Dirichlet problems of Stokes equations in exterior domains (see for instance \cite[Theorem 1]{KS}) states that the Dirichlet problem \eqref{eq-tvf} admits a unique solution $(\vv_\infty,\pi_\infty)$ in $D_0^{1,p}(\R^d\setminus \overline T;\R^d)\times L^p(\R^d\setminus \overline T)$;  necessarily this unique solution is
\be\label{vl=pil=0}
\vv_\infty=0,\quad \pi_\infty=0.
\ee
We remark that, to apply the result in \cite{KS}, we need $T$ to be of class $C^{2,\b}$.  This is one reason that we assume $C^{2,\b}$ regularity of domain $T$.

\medskip

Now we are ready to derive a contradiction. By \eqref{est-vk} and \eqref{est-vk1}, we have
$$
\sup_{k\in \N} \|\vv_k\|_{W^{1,p}(B_1\setminus \overline T)} \leq C<\infty.
$$
Then the Rellich-Kondrachov compact embedding theorem implies, up to a substraction of subsequence, that
\be\label{st-con1}
\vv_k \to  \vv_\infty \quad  \mbox{strongly in}\quad L^p(B_1\setminus \overline T),\quad \pi_k \to  \pi_\infty \quad  \mbox{strongly in}\quad W^{-1,p}(B_1\setminus \overline T).\nn
\ee

Finally,  by \eqref{vl=pil=0} and \eqref{ek-gk}, we pass $k\to \infty$ in \eqref{est-vk2} to obtain a contradiction:
$$
1 \leq 0.
$$

 This implies that the uniform estimate \eqref{est-sto-s} is true and we complete the proof of Theorem \ref{thm-sto-s}.

\section{Proof of Theorem \ref{thm-sto1-new}}\label{sec:pf12}

The proof of Theorem \ref{thm-sto1-new} is done in the following subsections where different values of $p$ are considered.

\subsection{The case $p>d$}\label{sec:p>d}

In this case, we have the following theorem:
\begin{theorem}\label{thm-sto1}
 Let $p>d$ and  let $\{\bg_\e\}_{0<\e<1} \subset L^p(\O_\e;\R^{d\times d})$ be uniformly bounded: $$\sup_{0<\e<1}\| \bg_\e\|_{L^p(\O_\e;\R^{d\times d})}\leq 1.$$
     Then, up to a substraction of subsequence, the zero extensions $\tilde \bg_\e$  of $\bg_\e$ in $\O$ admit the weak limit:
$$
\tilde \bg_\e \to \bg \quad \mbox{weakly in} \ L^p(\O;\R^{d\times d}), \quad \mbox{as $\e\to 0$}.
$$
 If the unique solution $\vv_\O \in W^{1,p}(\O;\R^{d})\subset C^{0,1-\frac{d}{p}}(\O;\R^{d})$ to the following Dirichlet problem
\be\label{2-O}
-\Delta \vv_\O+\nabla \pi_\O= \dive \bg,\quad \dive \vv_\O=0, \quad \mbox{in}~\O;\qquad \vv_\O=0,\quad \mbox{on} ~\d\O
\ee
 satisfies
\be\label{v0}
\vv_\O(0)\neq 0,
\ee
then the solutions $(\vv_\e,\pi_\e) \in W^{1,p}_0(\O_\e;\R^d)\times L^{p}_0(\O_\e)$ to \eqref{2} with source functions $\bg_\e$ satisfy
\be\label{est-sto1}
\liminf_{\e \to 0}\|\nabla \vv_\e\|_{L^p(\Omega_\e;\R^{d\times d})} =\infty.
\ee

\end{theorem}

In Theorem \ref{thm-sto1}  we used the Sobolev embedding $\vv_\O \in W^{1,p}(\O;\R^{d})\subset C^{0,1-\frac{d}{p}}(\O;\R^{d})$ with $p>d$. Then it makes sense to consider the value of $\vv_{\O}$ at a point.

Theorem \ref{thm-sto1} indicates that the uniform $W^{1,p}$ estimate holds with $p>d$ necessarily if the solution $\vv_{\O}$ to \eqref{2-O} vanishes at origin. However, this property is not necessarily satisfied for a general (and smooth) $\bg$. Hence, when $p>d$, Theorem \ref{thm-sto1-new} is a direct consequence of Theorem \ref{thm-sto1}.

%
%

\begin{proof}[Proof of Theorem \ref{thm-sto1}]

 Let $p>d$ and $\{\bg_\e\}_{0<\e<1} \subset L^p(\O_\e;\R^{d\times d})$ satisfying the assumptions in Theorem \ref{thm-sto1}. Let $(\vv_\e,\pi_\e) \in W^{1,p}_0(\O_\e;\R^d)\times L^{p}_0(\O_\e)$ be the unique solution to \eqref{2} with source function $\bg_\e$. This means for any $\vp \in C_{c,{\rm div}}^\infty (\O_\e;\R^d)$ and any $\phi\in C_c^\infty(\O_\e)$, there holds
\ba\label{wk-fm-tve}
\int_{\O_\e}\nabla  \vv_{\e} : \nabla \vp \, \dx =-\int_{\O_{\e}}\bg_\e : \nabla { \vp} \, \dx,\quad \int_{\O_\e} \vv_{\e} \cdot \nabla \phi \, \dx=0.
\ea
The subscript div means divergence free:
$$
C_{c,{\rm div}}^\infty(D;\R^d):=\{\phi\in C_{c}^\infty(D;\R^d): \dive \phi=0\},\quad \mbox{for any domain $D\subset \R^d$}.
$$

By contradiction we assume
\be\label{est-sto-con}
\liminf_{\e \to 0}\|\nabla \vv_\e\|_{L^p(\Omega_\e;\R^{d\times d})} <\infty.\nn
\ee
Then there exists a subsequence $\{\e_k\}_{k\in\N}$ such that
\be\label{est-sto-con1}
\sup_{k\in \N}\|\nabla \vv_{\e_k}\|_{L^p(\Omega_\e;\R^{d\times d})}<\infty.\nn
\ee
This implies the weak convergence of the zero extensions:
 \be\label{est-sto-con2}
\tilde \vv_{\e_k} \to \tilde \vv \ \mbox{weakly in} \ W_0^{1,p}(\O;\R^d),\quad \mbox{as $k\to \infty$},\nn
\ee
where $\tilde\vv_{\e_k}$ are defined as
$$
\tilde\vv_{\e_k} =\vv_{\e_k} \quad \mbox{in}\ \O_\e,\quad \tilde\vv_{\e_k} =0 \quad \mbox{on}\ \e \overline T.
$$
Since $p>d$, the Sobolev compact embedding theorem implies that $\tilde \vv \in C^{0,1-\frac{d}{p}}(\O;\R^d)$, and, up to a substraction of subsequence, that
\be\label{st-tv}
\mbox{for any $\l<1-\frac{d}{p}$}, \quad \tilde \vv_{\e_k} \to \tilde \vv \ \mbox{strongly in}\  C^{0,\l}( \O;\R^d), \  \mbox{as} \ k\to \infty.
\ee
Hence, the fact $\tilde \vv_{\e_k}=0$ on $\e_k T \ni 0$ and the strong convergence in \eqref{st-tv} implies
\be\label{tv-0}
\tilde\vv (0)=0.
\ee

Passing $\e_k\to 0$ in \eqref{wk-fm-tve} gives
\ba\label{wk-fm-tve1}
&\int_{\O}\nabla  \tilde \vv : \nabla  \vp \, \dx =-\int_{\O}\bg : \nabla \vp \, \dx,\quad &&\mbox{for any $\vp \in C_{c,{\rm div}}^\infty (\O\setminus \{0\};\R^d)$},\\
&\int_{\O}  \tilde \vv \cdot \nabla  \phi \, \dx =0, \quad &&\mbox{for any $\phi \in C_{c}^\infty (\O\setminus \{0\})$}.
\ea
Thus, by equations \eqref{tv-0} and \eqref{wk-fm-tve1}, we obtain
\be\label{eq-tv}
-\Delta \tilde \vv+\nabla \tilde \pi= \dive \bg,\quad \dive \tilde \vv=0, \quad \mbox{in}~\O\setminus \{0\};\qquad \tilde \vv=0,\quad \mbox{on} ~\d\O\cup \{0\},
\ee
for some $\tilde\pi\in \mathcal{D}'(\O\setminus \{0\})$ with zero mean value. Moreover, the first equation in \eqref{eq-tv} implies
\be\label{est-nabla-p}
\nabla \tilde \pi \in W^{-1,p}(\O\setminus \{0\};\R^d).\nn
\ee

Since $\O\setminus \{0\}$ is bounded and is a finite union of star-shaped domains, by employing Theorem III.3.1 in \cite{Galdi-book},  for any $1<q<\infty$, there exists a linear operator ${\mathcal B}$ (known as Bogovskii's operator) from $L^q_0(\O\setminus \{0\})$ to $W^{1,q}_0(\O\setminus \{0\};\R^d)$ such that for any $f\in L^q_0(\O\setminus \{0\})$, there holds
\be\label{bog-O-0}
\dive \CalB(f)=f \quad \mbox{in} \ \O\setminus \{0\};\quad \|\CalB(f)\|_{W^{1,q}_0(\O\setminus \{0\};\R^d)} \leq C(q,d,\O)\,\|f\|_{L^q(\O\setminus \{0\})}.\nn
\ee

Another point of view is that, $\O\setminus \{0\}$ is a John Domain on which a uniformly bounded Bogovskii type operator can be constructed (see for instance \cite{ADM}).

Then for any nonzero $f\in C_c^\infty(\O\setminus \{0\})$, we have
\ba\label{est-Lp-pi}
&\langle \tilde \pi, f\rangle = \langle \tilde \pi, f-\langle f \rangle \rangle  = \langle \tilde \pi, \dive {\mathcal B}(f-\langle f \rangle)\rangle = \langle \nabla \tilde \pi,  {\mathcal B}(f-\langle f \rangle)\rangle\\
&\quad  \leq  \|\nabla\tilde \pi \|_{W^{-1,p}(\O\setminus \{0\};\R^d)} \|{\mathcal B}(f-\langle f \rangle)\|_{W_0^{1,p'}(\O\setminus \{0\};\R^d)}\\
&\quad  \leq C\, \|\nabla\tilde \pi \|_{W^{-1,p}(\O\setminus \{0\};\R^d)} \|f-\langle f \rangle\|_{L^{p'}(\O\setminus \{0\})}\\
&\quad  \leq C\, \|\nabla\tilde \pi \|_{W^{-1,p}(\O\setminus \{0\};\R^d)} \|f\|_{L^{p'}(\O\setminus \{0\})},
\ea
where $\langle f \rangle:= \frac{1}{|\O\setminus \{0\}}|\int_{\O\setminus \{0\}}f(x)\,\dx$ is the mean value of $f$. This implies
\be\label{est-Lp-pi1}
\|\tilde \pi\|_{L^p(\O\setminus \{0\})} \leq C\,  \|\nabla \tilde \pi\|_{W^{-1,p}(\O\setminus \{0\};\R^d)}<\infty.\nn
\ee

Now we have $(\tilde \vv,\tilde \pi)$ is the solution to Dirichlet problem \eqref{eq-tv} defined in domain $\O\setminus \{0\}$.  In the following proposition, we show $(\tilde \vv,\tilde \pi)$ is also the solution to Dirichlet problem \eqref{2-O} in domain $\O$:

\begin{proposition}\label{prop:tu} The couple $(\tilde \vv,\tilde \pi)$  solves the following Dirichlet problem:
\be\label{eq-tv-f}
-\Delta \tilde \vv+\nabla \tilde \pi= \dive \bg,\quad \dive \tilde \vv=0, \quad \mbox{in}~\O;\qquad \tilde \vv=0,\quad \mbox{on} ~\d\O.\nn
\ee

\end{proposition}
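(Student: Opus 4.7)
The plan is to remove the puncture at the origin from the weak formulation by multiplying test functions by a radial cutoff that kills a shrinking neighbourhood of $0$, and then to send the cutoff radius to zero. Since $p>d$ we have $p'<d'<d$, so the point $\{0\}$ is negligible in the relevant capacitary sense; this is what makes the argument work and what the proof really hinges on.

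Concretely, fix $\chi\in C_c^\infty(B_1)$ with $\chi\equiv 1$ on $B_{1/2}$, and set $\chi_\de(x):=1-\chi(x/\de)$, so that $\chi_\de\equiv 0$ on $B_{\de/2}$, $\chi_\de\equiv 1$ outside $B_\de$, and $\|\nabla\chi_\de\|_{L^{p'}(\R^d)}\le C\,\de^{d/p'-1}$. Given any $\vp\in C_c^\infty(\O;\R^d)$, the function $\chi_\de\vp$ belongs to $C_c^\infty(\O\setminus\{0\};\R^d)$, hence is an admissible test in the weak form of \eqref{eq-tv}; writing out $\nabla(\chi_\de\vp)=\chi_\de\nabla\vp+\nabla\chi_\de\otimes\vp$ and $\dive(\chi_\de\vp)=\chi_\de\dive\vp+\nabla\chi_\de\cdot\vp$, I would split the resulting identity into a main part (with $\chi_\de$ in front of $\nabla\vp$ or $\dive\vp$) and an error part (with $\nabla\chi_\de$). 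The main part converges to the $\O$-integrals $\int_\O \nabla\tilde\vv:\nabla\vp-\int_\O \tilde\pi\dive\vp+\int_\O\bg:\nabla\vp$ by dominated convergence, since $\nabla\tilde\vv,\tilde\pi,\bg\in L^p(\O)$.

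For the error terms the key estimate is H\"older's inequality: for instance
\[
\Bigl|\int_\O \nabla\tilde\vv:(\nabla\chi_\de\otimes\vp)\,\dx\Bigr|\le \|\vp\|_\infty\,\|\nabla\tilde\vv\|_{L^p(B_\de)}\,\|\nabla\chi_\de\|_{L^{p'}(\R^d)},
\]
and the analogous bounds for the $\tilde\pi$ and $\bg$ error terms. The factor $\|\nabla\chi_\de\|_{L^{p'}}\le C\de^{d/p'-1}$ is bounded because $p'<d$, and $\|\nabla\tilde\vv\|_{L^p(B_\de)}$, $\|\tilde\pi\|_{L^p(B_\de)}$, $\|\bg\|_{L^p(B_\de)}$ all tend to zero by absolute continuity of the Lebesgue integral, so each error term vanishes as $\de\to 0$. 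For the incompressibility condition I would repeat the same cutoff argument with a scalar test function $\phi\in C_c^\infty(\O)$; here the error $\int_\O\tilde\vv\cdot\nabla\chi_\de\,\phi$ is even easier to handle because $\tilde\vv$ is H\"older continuous with $\tilde\vv(0)=0$ (from \eqref{tv-0} and $p>d$), so $|\tilde\vv|\le C\de^{1-d/p}$ on $B_\de$, and combined with $\|\nabla\chi_\de\|_{L^1}\le C\de^{d-1}$ the product is $O(\de^{d-d/p})\to 0$.

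Passing to the limit $\de\to 0$ yields the weak form of the equations tested against arbitrary $\vp\in C_c^\infty(\O;\R^d)$ and $\phi\in C_c^\infty(\O)$, which is exactly \eqref{eq-tv-f}; the boundary condition $\tilde\vv=0$ on $\d\O$ is automatic since $\tilde\vv\in W_0^{1,p}(\O;\R^d)$ as a weak limit of zero extensions. The main obstacle is really just bookkeeping the error terms and verifying that the cutoff argument is licit; once one notices that $p>d$ forces $p'<d$ so $d/p'-1>0$, all three error pieces decay with an explicit power of $\de$, and there is no additional difficulty.
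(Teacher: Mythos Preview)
Your argument is correct and is essentially identical to the paper's own proof: both introduce a radial cutoff vanishing near the origin (the paper uses $\phi_n$ with scale $1/n$, you use $\chi_\de$), test the weak formulation on $\O\setminus\{0\}$ against $\chi_\de\vp$, and kill the commutator terms via H\"older and the bound $\|\nabla\chi_\de\|_{L^{p'}}\le C\de^{d/p'-1}$ with $p'<d$. The only cosmetic difference is that the paper bounds the error terms by $\|\nabla\tilde\vv+\bg\|_{L^p(\O)}\cdot n^{1-d/p'}\to 0$ directly, whereas you phrase it as ``bounded cutoff factor times local $L^p$ norm vanishing by absolute continuity''; both are fine, and in fact since $d/p'-1>0$ your cutoff factor already goes to zero, so the absolute-continuity step is not even needed.
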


\begin{proof}[Proof of Proposition \ref{prop:tu}]
It is sufficient to show the following two equalities:
\be\label{wk-fm-tu}
\int_{\O}\nabla\tilde \vv : \nabla \vp \, \dx - \int_{\O} \tilde \pi  \dive \vp \, \dx + \int_{\O}\bg : \nabla \vp=0 \, \dx \quad \mbox{for any $\vp\in C_c^\infty(\O;\R^d)$},
\ee
and
\be\label{wk-fm-tu-0}
\int_{\O} \vv \cdot \nabla \phi \, \dx =0, \quad \mbox{for any $\phi\in C_c^\infty(\O)$}.
\ee

To this end, we introduce cut-off functions sequence $\{\phi_n\}_{n\in \Z_+}\subset C^\infty(\R^d)$ satisfying
\be\label{cut-off2}
0\leq \phi_n\leq 1,\quad \phi_n =0 \ \mbox{in}\ B_{1/n}, \quad \phi_n =1 \ \mbox{on}\ B_{2/n}^c,\quad |\nabla\phi_n|\leq 4\,n.\nn
\ee
Then for any $1<q<\infty$, direct calculation gives
\be\label{cut-off20}
\|(1-\phi_n)\|_{L^q(\R^d)} \leq C \, n^{-\frac{d}{q}},\quad \|\nabla \phi_n\|_{L^q(\R^d;\R^d)} \leq C\, n^{1-\frac{d}{q}}.\nn
\ee

Let $\vp\in C_c^\infty(\O;\R^d)$ be a test function. We calculate
\ba\label{wk-fm-tu2}
&\int_{\O} (\nabla \tilde \vv +\bg) : \nabla \vp \, \dx = \int_{\O} (\nabla \tilde \vv+\bg) : \nabla (\vp\phi_n) \, \dx \\
&\quad - \int_{\O} (\nabla\tilde \vv + \bg)  : (\vp \otimes \nabla \phi_n) \, \dx + \int_{\O} (\nabla\tilde \vv+ \bg)  : (1-\phi_n)\nabla \vp \, \dx,\\
&\int_{\O} \tilde \pi \, \dive \vp \, \dx  = \int_{\O} \tilde \pi\,  \dive (\vp \phi_n) \, \dx -  \int_{\O} \tilde \pi  \vp\cdot \nabla \phi_n \, \dx + \int_{\O} \tilde \pi  (\dive \vp)(1- \phi_n) \, \dx.
\ea

Since $\phi_n \vp \in C_c^\infty(\O\setminus \{0\};\R^d)$ and $(\tilde \vv,\tilde \pi)$ is the solution to \eqref{eq-tv}, we then have
\be\label{est-tvn0}
\int_{\O} (\tilde \vv+\bg) : \nabla (\vp\phi_n) \, \dx -\int_{\O} \tilde \pi \, \dive (\vp \phi_n) \, \dx =0,\quad \mbox{for any $n\in\Z_+$}.
\ee
By \eqref{wk-fm-tu2} and \eqref{est-tvn0}, we obtain
\ba\label{est-tvn}
&\int_{\O}\nabla\tilde \vv : \nabla \vp \, \dx - \int_{\O} \tilde \pi  \dive \vp \, \dx + \int_{\O}\bg : \nabla \vp \, \dx \\
&\quad = - \int_{\O} (\nabla\tilde \vv + \bg)  : (\vp \otimes \nabla \phi_n) \, \dx + \int_{\O} (\nabla\tilde \vv+ \bg)  : (1-\phi_n)\nabla \vp \, \dx \\
&\qquad + \int_{\O} \tilde \pi  \vp\cdot \nabla \phi_n \, \dx - \int_{\O} \tilde \pi  (\dive \vp)(1- \phi_n) \, \dx.
\ea

 We then calculate
\ba\label{wk-fm-tu3}
&\left| \int_{\O} (\nabla\tilde \vv + \bg)  : (\vp \otimes \nabla \phi_n) \, \dx \right| \leq C\, \|\nabla\tilde \vv +\bg\|_{L^p(\O;\R^{d\times d})}\,\|\nabla \phi_n\|_{L^{p'}(\O;\R^d)} \leq C \, n^{1-\frac{d}{p'}},\\
&\left|\int_{\O} (\nabla\tilde \vv +\bg ) : (1-\phi_n)\nabla \vp \, \dx\right|\leq C\, \|\nabla\tilde \vv +\bg\|_{L^p(\O;\R^{d\times d})}\,\|(1- \phi_n)\|_{L^{p'}(\O)} \leq C \,n^{-\frac{d}{p'}},\\
&\left|\int_{\O} \tilde \pi  \vp\cdot \nabla \phi_n \, \dx\right|\leq C \|\tilde \pi\|_{L^p(\O)} \|\nabla \phi_n\|_{L^{p'}(\O;\R^d)} \leq C\,n^{1-\frac{d}{p'}},\\
&\left|\int_{\O} \tilde \pi  (\dive \vp)(1- \phi_n) \, \dx\right| \leq C \|\tilde \pi\|_{L^p(\O)} \|(1- \phi_n)\|_{L^{p'}(\O)} \leq C\,n^{-\frac{d}{p'}}.\nn
\ea

Since $p>d\geq 3$, the power $1-d/p'<0$. Therefore,  passing $n\to \infty$ in \eqref{est-tvn} implies the result in \eqref{wk-fm-tu}. Similar argument gives \eqref{wk-fm-tu-0}. We thus complete the proof of Proposition \ref{prop:tu}.

\end{proof}

Hence, by the uniqueness of the solution to Dirichlet problem  \eqref{2-O}, we have $\tilde \vv=\vv_\O$. We thus obtain a contradiction between \eqref{v0} and \eqref{tv-0}.  Then the estimate \eqref{est-sto1} holds true and we finish the proof of Theorem \ref{thm-sto1}.

\end{proof}

\subsection{The case $1<p<d'$} \label{sec:1<p<d'}

By Theorems \ref{thm-sto1} and a dual argument, we can prove the following result:

\begin{theorem}\label{thm-sto2}
 Let $1<p < d'$. If there exists $\bg\in L^{p'}(\O;\R^{d\times d})$ such that \eqref{2-O}-\eqref{v0} are satisfied, then for any $0<\e<1$ there exists ${\bf H}_\e\in L^p(\Omega_\e;\R^{d\times d})$ satisfying $\|{\bf H}_\e\|_{L^p(\Omega_\e;\R^{d\times d})}=1$ such that the solutions $(\ww_\e,\xi_\e) \in W^{1,p}_0(\Omega_\e;\R^{d})\times L^{p}_0(\O_\e) $ to \eqref{2} with source functions ${\bf H}_\e$ satisfy
\be\label{est-sto2}
\liminf_{\e \to 0}\|\nabla \ww_\e\|_{L^p(\Omega_\e;\R^{d\times d})} =\infty.
\ee

\end{theorem}

\begin{proof}[Proof of Theorem \ref{thm-sto2}]
Let $p$  and $\bg\in L^{p'}(\O;\R^d)$ be as in Theorem \ref{thm-sto2}. This ensures that the solution $\vv_\e$ to \eqref{2} with source function $\bg$ satisfies the estimate \eqref{est-sto1}.  Let
\be\label{def-He}
{\bf H}_\e:=\frac{|\nabla \vv_\e|^{p'-2}\nabla \vv_\e}{\|\nabla \vv_\e\|_{L^{p'}(\O_\e;\R^{d\times d})}^{\frac{p'}{p}}}\nn
\ee
satisfying
$
\|{\bf H}_\e\|_{L^p(\O_\e;\R^{d\times d})}=1,
$
and $(\ww_\e,\xi_\e) \in W^{1,p}_0(\Omega_\e)\times L^{p}_0(\O_\e) $ be the unique solution to \eqref{2} with source function ${\bf H}_\e$.  Then
\ba\label{est-we1}
\|\nabla \ww_\e\|_{L^{p}(\Omega_\e;\R^{d\times d})} & \geq \|\bg\|_{L^{p'}(\Omega_\e;\R^{d\times d})}^{-1}|\langle \nabla \ww_\e,\bg\rangle|=\|\bg\|_{L^{p'}(\Omega_\e;\R^{d\times d})}^{-1}|\langle  \nabla \ww_\e, \nabla \vv_\e \rangle|\\
&=\|\bg\|_{L^{p'}(\Omega_\e;\R^{d\times d})}^{-1}|\langle  {\bf H}_\e, \nabla \vv_\e \rangle|=\|\bg\|_{L^{p'}(\Omega_\e;\R^{d\times d})}^{-1} \|\nabla \vv_\e\|_{L^{p'}(\O_\e;\R^{d\times d})}.\nn
\ea
Together with  \eqref{est-sto1}, we deduce our desire estimate \eqref{est-sto2}. The proof of Theorem \ref{thm-sto2} is competed.
\end{proof}

Now it is left to consider the case $p=d$ and $p=d'$. This is done in the next subsection.

\subsection{The case $p\in \{d',d\}$} \label{sec:p=d-d'}

We start with the case $p=d'$ with $d\geq 3$. Clearly $1<p=d'<d$.  Let $\bg_\e\in L^{p}(\Omega_\e;\R^{d\times d})$ satisfying $\|\bg_\e\|_{L^p(\Omega_\e;\R^{d\times d})}\leq 1$ for all $0<\e<1$. Let $(\vv_\e,\pi_\e) \in W^{1,p}_0(\Omega_\e)\times L^{p}_0(\O_\e) $ be the unique weak solution to \eqref{2} with source functions $\bg_\e$. The strategy is to derive some necessary conditions ensuring the uniform estimates and to show that such necessary conditions could fail. Similarly as in Section \ref{sec:p>d}, we assume the uniform estimates holds:
\be\label{est-sto-con-new}
\liminf_{\e \to 0}\|\nabla \vv_\e\|_{L^p(\Omega_\e;\R^{d\times d})} <\infty,
\ee
which means, there exists a subsequence $\{\e_k\}_{k\in\N}$ such that
\be\label{est-sto-con1-new}
\sup_{k\in \N}\|\nabla \vv_{\e_k}\|_{L^p(\Omega_\e;\R^{d\times d})}<\infty.
\ee

On $\O_\e$, one can construct a uniformly bounded Bogovskii type operator. One way to do this is to generalize the proof of Lemma 2.2.4 in \cite{ALL-NS1} from the $L^2$ framework to the $L^p$ framework, where an observation is that the argument in the proof of Lemma 2.2.4 in \cite{ALL-NS1} works in $L^p$ framework for any $1<p<d$. Since Sobolev embedding $W^{1,p}(\R^d) \hookrightarrow L^{p^*}(\R^d)$ is used in the proof, the argument cannot be generalized to $p\geq d$.  See also Lemma \ref{lem:Alem2} given later, in the next section.  By employing such a Bogovskii type operator and by the similar argument as  in \eqref{est-Lp-pi} and \eqref{est-Lp-pi1}, we have
\be\label{est-sto-con2-new}
\sup_{k\in \N}\|\pi_{\e_k}\|_{L^p(\Omega_\e)}<\infty.
\ee

Similar as \eqref{2-1} and \eqref{def-u-s}, we consider the rescaled domain $\wto_\e:=\O_\e/\e=(\O/\e)\setminus \overline T$ and the following change of variables:
\be\label{def-u-s-new}
\tilde \vv_\e (\cdot):= \e^{d/p} \e^{-1}\vv_\e(\e \cdot),\quad \tilde \pi (\cdot ):= \e^{d/p} \pi_\e(\e \cdot), \quad \tilde \bg_\e(\cdot ):= \e^{d/p} \, \bg_\e (\e \cdot).
\ee
Clearly,
\be\label{def-u-s-new-pt1}
(\tilde \vv_\e,\tilde \pi_\e) \in W^{1,p}_0(\wto_\e;\R^{d})\times L^{p}_0(\wto_\e), \quad \tilde \bg_\e\in L^{p}(\wto_\e;\R^{d\times d})
\ee
with norms
\ba\label{def-u-s-new-pt2}
&\|\nabla \tilde \vv_\e\|_{L^{p}(\wto_\e;\R^{d\times d})} = \|\nabla \vv_\e\|_{L^{p}(\O_\e;\R^{d\times d})},\quad  \|\nabla \tilde \pi_\e\|_{L^{p}(\wto_\e)} = \|\nabla \pi_\e\|_{L^{p}(\O_\e)}, \\
&\|\tilde \bg_\e\|_{L^{p}(\wto_\e;\R^{d\times d})} = \|\bg_\e\|_{L^{p}(\O_\e;\R^{d\times d})}.
\ea
Moreover, $(\tilde \vv_\e,\tilde \pi_\e) $ solves
\be\label{2-1-new}
\left\{\begin{aligned}
-\Delta \tilde \vv_\e + \nabla \tilde \pi_\e &=\dive \tilde \bg_\e,\quad &&\mbox{in}~\wto_\e,\\
\dive \tilde \vv_\e & =0 ,\quad &&\mbox{in}~\wto_\e,\\
\tilde \vv_\e & = 0,\quad &&\mbox{on} ~\d\wto_\e.
\end{aligned}\right.
\ee

By \eqref{est-sto-con1-new}, \eqref{est-sto-con2-new} and \eqref{def-u-s-new-pt2}, we have
$$
\sup_{k\in \N}\left(\|\nabla \tilde \vv_{\e_k}\|_{L^{p}(\wto_{\e_k};\R^{d\times d})} +  \|\nabla \tilde \pi_{\e_k}\|_{L^{p}(\wto_{\e_k})} + \|\tilde \bg_{\e_k}\|_{L^{p}(\wto_{\e_k};\R^{d\times d})}\right) <\infty.
$$
Now we can apply the argument in Section \ref{sec:end-lap} to derive that (see \eqref{conv-t-vk}), up to a substraction of subsequence, there holds
\ba\label{conv-t-vk-new}
&\tilde \vv_{\e_k} \to \vv_\infty \ \mbox{weakly in $L^{p*}(\R^d\setminus \overline T;\R^d)$}, \quad \nabla \tilde \vv_{\e_k} \to \nabla \vv_\infty \ \mbox{weakly in $L^{p}(\R^d\setminus \overline T;\R^{d\times d})$},\\
&\tilde \pi_{\e_k} \to \pi_\infty \ \mbox{weakly in $L^{p}(\R^d\setminus \overline T)$}, \quad \tilde \bg_{\e_k} \to \bg_\infty \ \mbox{weakly in $L^{p}(\R^d\setminus \overline T)$}.
\ea
Indeed,  in the proof of Theorem \ref{thm-sto-s}, the restriction $d' < p < d$ is needed in different places: the restriction $p>d'$ is needed for proving Lemma \ref{lem:lap-Be} and then proving Proposition \ref{prop:u1}, while the restriction $p<d$ is needed in the contradiction argument in Section \ref{sec:end-lap}. The contradiction argument in Section \ref{sec:end-lap} works for any $1<p<d$ which includes $p=d'$ under consideration here.

Moreover, again by the similar argument as in Section \ref{sec:end-lap}, the limit in \eqref{conv-t-vk-new} solves the Stokes equations in exterior domain $\R^d\setminus \overline T:$
\be\label{eq-tvf-new}
 \left\{\begin{aligned}
-\Delta \vv_\infty+\nabla \pi_\infty & = \dive \bg_\infty,\quad &&\mbox{in}~\R^d \setminus \overline T,\\
\dive \vv_\infty &=0 ,\quad &&\mbox{in}~\R^d \setminus \overline T.\\
\vv_\infty & \in D_0^{1,p}(\R^d \setminus \overline T;\R^d),
\end{aligned}\right.
\ee
in the sense that for any $\psi \in C_c^\infty(\R^d\setminus \overline T;\R^d)$, there holds
\be\label{eq-tvl-new}
\int_{\R^d \setminus \overline T} \nabla  \vv_\infty :\nabla \psi \,\dx - \int_{\R^d \setminus \overline T}  \pi_\infty \dive \psi \,\dx = - \int_{\R^d \setminus \overline T} \bg_\infty :\nabla \psi \,\dx.
\ee
Then, by the density argument, we can prove that \eqref{eq-tvl-new} holds for any test function $\psi\in D^{1,p'}_0(\R^d\setminus \overline T;\R^d)$. Indeed, by the definition of $D^{1,p'}_0$ (see Section \ref{sec:homo-sobolev}),  for any $\psi \in D^{1,p'}_0(\R^d\setminus \overline T;\R^d)$, there exists a sequence $\psi_k \in C_c^\infty(\R^d\setminus \overline T;\R^d), \ k \in \N$, such that
$$
\nabla \psi_k \to \nabla \psi \ \mbox{strongly in} \ L^{p'}(\R^d\setminus \overline T;\R^{d\times d}).
$$
Then passing $k\to \infty$ implies that \eqref{eq-tvl-new} holds for any test function $\psi\in D^{1,p'}_0(\R^d\setminus \overline T;\R^d)$.

Hence, the couple $(\vv_\infty, \pi_\infty)$ fulfills the definition of $p$-generalized solutions stated in Definition V.5.1 in Galdi's book \cite{Galdi-book}, where a couple $(\vv_\infty, \pi_\infty)\in D^{1,p}_0(\R^d\setminus \overline T;\R^d) \times L^p(\R^d\setminus \overline T)$ is said to be a $p$-generalized solution to \eqref{eq-tvf-new} provided  \eqref{eq-tvl-new} holds for any test function $\psi\in D^{1,p'}_0(\R^d\setminus \overline T;\R^d)$.

 We recall the following celebrated result describing the geometric structure of the kernel spaces of $q$-generalized solutions:
\begin{lemma}[Lemma V.5.1 in \cite{Galdi-book}]\label{lem:Galdi-kernel} Let $d\geq 2$ and $\O\subset \R^d$ be an exterior domain of class $C^2$. Denote by $S_q$
the subspace of $D_0^{1,q}(\O;\R^d)\times L^q(\O)$ constituted by q-generalized solutions $(v, \pi)$ to
\be\label{Stokes-exterior}
\left\{\begin{aligned}
-\Delta \vv+\nabla \pi & = 0 ,\quad &&\mbox{in}~\O,\\
\dive \vv&=0 ,\quad &&\mbox{in}~\O,\\
\vv&=0,\quad &&\mbox{on} ~\d\O.
\end{aligned}\right.
\ee
Then, if $1<q<d$ ($1<q\leq d$ for $d=2$), $S_q = \{\bf{0}\}$, while if $q\geq d$ ($q>d$ for $d=2$), ${\rm dim} S_q = d$.

\end{lemma}

As pointed out in (V.5.4) in \cite{Galdi-book}, a consequence of Lemma \ref{lem:Galdi-kernel} is that, there exists a $p$-generalized solution  $(\vv_\infty, \pi_\infty)\in D^{1,p}_0(\R^d\setminus \overline T;\R^d) \times L^p(\R^d\setminus \overline T)$ with $1<p=d'<d$ to \eqref{eq-tvf-new} only if the source term $\bg_\infty$ satisfies
\be\label{neces-cond-p=d}
\int_{\R^d\setminus \overline T} \bg_\infty :\nabla {\bf w} \,\dx = 0, \quad \mbox{for all}  \ {\bf w} \in S_{d}.
\ee
This is called  the  {\em compatibility condition} in Chapter V.5 in \cite{Galdi-book}. It is shown in \cite{Galdi-book} this compatibility condition is not only necessary, but also sufficient.  Hence, we have shown the following result:
\begin{theorem}\label{thm-sto3}
Let $p=d'$ with $d\geq 3$. Let $\bg_\e\in L^{p}(\Omega_\e;\R^{d\times d})$ satisfying $\|\bg_\e\|_{L^p(\Omega_\e;\R^{d\times d})}\leq 1$ for all $0<\e<1$. Let $(\vv_\e,\pi_\e) \in W^{1,p}_0(\Omega_\e;\R^{d})\times L^{p}_0(\O_\e) $ be the unique weak solution to \eqref{2} with source function $\bg_\e$.  Let  $\bg_{\infty}$ be defined through \eqref{def-u-s-new} and \eqref{conv-t-vk-new}.
If the uniform estimate \eqref{est-sto-con-new} holds,  then the compatibility condition \eqref{neces-cond-p=d} is satisfied.

\end{theorem}

While, as shown below by an example,  the compatibility condition \eqref{neces-cond-p=d} may fail for some $\bg_{\infty}$.  We thus can find a family  $\{\bg_{\e}\}_{0<\e<1}$ which fulfills our request in Theorem \ref{thm-sto1-new}.

  From Lemma \ref{lem:Galdi-kernel}, we know that $S_{d}$ is a subspace of $D_{0}^{1,d}(\R^{d}\setminus \overline T;\R^{d})\times L^{d}(\R^{d}\setminus \overline T)$ of dimension $d$. Let $\ww \in S_{d}$ be nonzero and define
$$
\tilde \bg_{\e} = \frac{|\nabla \ww|^{d-2}\nabla \ww}{\|\nabla \ww\|_{L^{d}(\R^{d}\setminus \overline T)}^{\frac{d}{d'}}}, \quad \forall \e\in (0,1).
$$
Clearly  $\tilde \bg_\e\in L^{d'}(\R^{d}\setminus \overline T;\R^{d\times d})$ satisfying $\|\tilde \bg_\e\|_{L^{d'}(\R^{d}\setminus \overline T;\R^{d\times d})} = 1$ for each $0<\e<1$.  A choice of  the family $\{\bg_{\e}\}_{0<\e<1}$ to prove Theorem \ref{thm-sto1-new} is the following:
$$
\bg_{\e} (\cdot) : =  \e^{-d/p} \,  \tilde \bg_{\e} (\cdot/\e).
$$
Firstly,
$$
\|\bg_{\e}\|_{L^{d'}(\O_{\e})}  = \|\tilde \bg_{\e} \|_{L^{d'}(\wto_{\e})} \leq  \|\tilde \bg_{\e} \|_{L^{d'}(\R^{d}\setminus \overline T)} =1, \quad \forall \e\in (0,1).
$$
Moreover, since  $\tilde \bg_{\e} $ is actually independent of $\e$, its weak limit in $L^{d'}(\R^{d}\setminus \overline T)$ is itself:
$$
\bg_{\infty} = \tilde \bg_{\e} = \frac{|\nabla \ww|^{d-2}\nabla \ww}{\|\nabla \ww\|_{L^{d}(\R^{d}\setminus \overline T)}^{\frac{d}{d'}}}.
$$
However, the compatibility condition \eqref{neces-cond-p=d} is not satisfied:
\be\label{neces-cond-p=d-fail}
\int_{\R^d\setminus \overline T} \bg_\infty :\nabla {\bf w} \,\dx = \int_{\R^d\setminus \overline T} \frac{|\nabla \ww|^{d-2}\nabla \ww}{\|\nabla \ww\|_{L^{d}(\R^{d}\setminus \overline T)}^{\frac{d}{d'}}}:\nabla {\bf w} \,\dx = \|\nabla \ww\|_{L^{d}(\R^{d}\setminus \overline T)} \neq 0.
\ee

\medskip

Similarly as in Section \ref{sec:1<p<d'}, we can prove the case $p=d$ by using Theorem \ref{thm-sto3} and a dual argument.  We thus compete the proof of Theorem \ref{thm-sto1-new}.

\begin{remark}
We remark that the argument to prove the case $p=d'$ in this section works also for the case $1<p<d'$.

\end{remark}

\section{Generalized restriction operator}\label{sec:res-bog-proof}

This section is devoted to the proof of Theorem \ref{thm-res}, and Corollary \ref{thm-bog} is proved at the end by using the construction of the restriction operator.

The proof is done mainly by employing the construction of Allaire in Section 2.2 in \cite{ALL-NS1}. Thanks to our results  in Theorem \ref{thm-sto}, the generalization of Allaire's construction from the $L^2$ framework to the $L^p$ framework is rather straightforward. For the completion of the results and for the convenience of the readers, we give a brief proof in the following; in particular, we pointed out the differences in the proof between the $L^2$ case and the $L^p$ case.

\subsection{Proof of Theorem \ref{thm-res}}\label{sec:res-proof}

First of all, we recall the construction of Allaire. Let $D$ and $D_\e$ be the domains introduced in Section \ref{sec:domain}. By the distribution of the holes assumed in \eqref{def-holes1} and \eqref{T-Te}, in each cube $\e C_k$,
\be\label{Bk-e}
T_{\e,k}=\e^\a {\cal O}_{k} (T)+x_k \subset\subset B(x_k,b_1 \e)\subset \subset \e C_k.
\ee

For any $\uu \in W_0^{1,p}(D;\R^3)$ with $3/2<p<3$, we define $R_\e (\uu)$ in the following way:
\ba\label{def-Reu}
&R_\e(\uu) (x) :=\uu (x), &&\mbox{for}\  x \in D\setminus\Big( \bigcup_{k\in K_\e} B(x_k,b_1 \e)\Big),\\
&R_\e(\uu) (x) :=\uu_{\e,k} (x), &&\mbox{for} \  x \in B(x_k,b_1 \e)\setminus \overline T_{\e,k},\  \ k\in K_\e,
\ea
where $\uu_{\e,k}$ solves
\be\label{def-uek}\left\{\begin{aligned}
&-\Delta \uu_{\e,k} + \nabla p_{\e,k}  =-\Delta \uu, \ &&\mbox{in}\ B(x_k,b_1 \e)\setminus \overline T_{\e,k},\\
&\dive \uu_{\e,k}=\dive \uu + \frac{1}{|B(x_k,b_1 \e)\setminus \overline T_{\e,k}|}\int_{\overline T_{\e,k}} \dive \uu\, \dx, \ &&\mbox{in}\ B(x_k,b_1 \e)\setminus \overline T_{\e,k},\\
&\uu_{\e,k}= \uu, \ &&\mbox{on}\ \d B(x_k,b_1 \e),\\
&\uu_{\e,k}=0,\ &&\mbox{on}\ \d  T_{\e,k}.
\end{aligned}\right.
\ee

The key point is to prove the following result:
\begin{proposition}\label{prop:uek}
Let $\uu \in W_0^{1,p}(D;\R^3)$ with $3/2<p<3$. The Dirichlet problem \eqref{def-uek} admits  a unique solution $(\uu_{\e,k},p_{\e,k}) \in W^{1,p}(B(x_k,b_1 \e)\setminus \overline T_{\e,k};\R^{3})\times L^{p}_0(B(x_k,b_1 \e)\setminus \overline T_{\e,k})$ satisfying
\ba\label{est-uek-new}
&\|\nabla \uu_{\e,k}\|_{L^p(B(x_k,b_1 \e)\setminus \overline T_{\e,k};\R^{{3\times3}})} + \|p_{\e,k}\|_{L^p(B(x_k,b_1 \e)\setminus \overline T_{\e,k})}  \\
&\qquad\leq C\, \Big(\|\nabla \uu\|_{L^p(B(x_k,b_1 \e);\R^{{3\times3}})}+ \e^{\frac{(3-p)\a-3}{p}}\|\uu\|_{L^p(B(x_k,b_1 \e);\R^{3})} \Big).
\ea

\end{proposition}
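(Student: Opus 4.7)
The strategy is to reduce the annular problem on $B(x_k, b_1\e)\setminus \overline T_{\e,k}$ to a fixed-geometry Stokes problem covered by Theorem~\ref{thm-sto}, by first rescaling to a fixed outer ball and then absorbing the non-zero boundary data and divergence defect via a cutoff together with a Bogovskii correction. Existence and uniqueness of $(\uu_{\e,k},p_{\e,k})$ in $W^{1,p}\times L^{p}_0$ on the annulus follow from the classical Stokes theory on bounded $C^1$ domains (Remark~\ref{rem:stokes}); the estimate constant there depends on the $C^1$ character of the annulus and degenerates as $\e\to 0$, so the real content of \eqref{est-uek-new} is the $\e$-independent bound.

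\textbf{Rescaling and reduction.} After translating and rotating we may take $x_k=0$ and $T_{\e,k}=\e^\a T$. Setting $y:=x/\e$ and $\tilde\uu(y):=\uu(\e y)$, $\tilde\uu_\e(y):=\uu_{\e,k}(\e y)$, $\tilde p_\e(y):=\e\, p_{\e,k}(\e y)$, the system \eqref{def-uek} transforms into a Stokes problem on $\tilde\Omega_\e:=B(0,b_1)\setminus \overline{\e^{\a-1}T}$, with boundary value $\tilde\uu$ on $\d B(0,b_1)$, zero on $\d(\e^{\a-1}T)$, and divergence $\dive_y\tilde\uu + \tilde c_\e$ for an explicit constant $\tilde c_\e$. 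This is exactly the setting of Theorem~\ref{thm-sto}, now with fixed outer domain $B(0,b_1)$ and ``small hole'' parameter $\e^{\a-1}$. Next choose a cutoff $\chi_\e\in C^\infty(B(0,b_1))$ with $\chi_\e\equiv 1$ near $\d B(0,b_1)$, $\chi_\e\equiv 0$ on a neighborhood of $\e^{\a-1}\overline T$ of thickness of order $\e^{\a-1}$, and $|\nabla\chi_\e|$ of order $\e^{-(\a-1)}$ supported on a shell of volume of order $\e^{3(\a-1)}$. Then $\chi_\e\tilde\uu$ carries the outer boundary data and vanishes on the inner boundary. The divergence residual
\[
f_\e := (1-\chi_\e)\dive_y\tilde\uu - \tilde\uu\cdot\nabla\chi_\e + \tilde c_\e
\]
is supported near the hole and has zero mean on $\tilde\Omega_\e$ by compatibility; a Bogovskii operator applied on the shell (after a local rescaling of the shell onto a reference annulus of unit size, so that the constant is $\e$-independent) provides $\Phi_\e\in W^{1,p}_0$ with $\dive\Phi_\e=f_\e$ and $\|\nabla\Phi_\e\|_{L^p}\leq C\|f_\e\|_{L^p}$. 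The remainder $\tilde\vv_\e:=\tilde\uu_\e - \chi_\e\tilde\uu - \Phi_\e$ is then divergence free, vanishes on $\d\tilde\Omega_\e$, and satisfies a Stokes system with right-hand side in divergence form; Theorem~\ref{thm-sto} controls $\|\nabla\tilde\vv_\e\|_{L^p(\tilde\Omega_\e)} + \|\tilde p_\e\|_{L^p(\tilde\Omega_\e)}$ uniformly in $\e$ by $\|\nabla(\chi_\e\tilde\uu)\|_{L^p} + \|\nabla\Phi_\e\|_{L^p}$.

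\textbf{Scaling estimates and main obstacle.} The decisive term is $\nabla\chi_\e\otimes\tilde\uu$ on the shell. H\"older's inequality with the exponent pair $(L^3,L^{p^*})$, valid since $1/p=1/3+1/p^*$ when $d=3$ and $p<3$, gives $\|\nabla\chi_\e\|_{L^3}=O(1)$ uniformly (the shell volume cancels the $L^\infty$-size of $|\nabla\chi_\e|$), reducing matters to estimating $\|\tilde\uu\|_{L^{p^*}(B(0,b_1))}$. Unscaling $\tilde\uu(y)=\uu(\e y)$ transfers this into a power of $\e$ times $\|\uu\|_{L^{p^*}(B(x_k,b_1\e))}$, and a local Sobolev--Poincar\'e estimate on the ball $B(x_k,b_1\e)$ of radius $b_1\e$ converts the latter into a combination of $\|\nabla\uu\|_{L^p(B(x_k,b_1\e))}$ and $\e^{-1}\|\uu\|_{L^p(B(x_k,b_1\e))}$. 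Finally, rescaling back via $\|\nabla\uu_{\e,k}\|_{L^p}=\e^{3/p-1}\|\nabla\tilde\uu_\e\|_{L^p}$ and the analogous pressure scaling, and summing the cumulative exponents from the shell volume $\e^{3(\a-1)}$, the cutoff gradient $\e^{-(\a-1)}$, the Sobolev embedding, and the two rescalings, produces the coefficient $\e^{\frac{(3-p)\a-3}{p}}$ in front of $\|\uu\|_{L^p(B(x_k,b_1\e))}$. The main obstacle is verifying both that the Bogovskii corrector on the shrinking shell has an $\e$-independent operator norm and that the chain of H\"older/Sobolev estimates collapses to exactly this sharp power; this is where the hypothesis $3/2<p<3$ is used in full, via both the embedding $W^{1,p}\hookrightarrow L^{p^*}$ and the applicability of Theorem~\ref{thm-sto}.
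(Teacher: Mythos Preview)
Your overall architecture---rescale to a fixed outer ball, lift the boundary data by a cutoff, repair the divergence via Bogovskii, then invoke Theorem~\ref{thm-sto}---is exactly the paper's route (packaged there as Lemmas~\ref{lem:Alem1}--\ref{lem:Alem3}). But your power-counting for the coefficient in front of $\|\uu\|_{L^p(B(x_k,b_1\e))}$ is wrong: the chain you describe yields only $\e^{-1}$, not the sharper $\e^{\frac{(3-p)\a-3}{p}}$ asserted in \eqref{est-uek-new}. Once you use $\|\nabla\chi_\e\otimes\tilde\uu\|_{L^p}\le \|\nabla\chi_\e\|_{L^3}\,\|\tilde\uu\|_{L^{p^*}}$ and note $\|\nabla\chi_\e\|_{L^3}=O(1)$, the shell volume $\e^{3(\a-1)}$ and the cutoff-gradient size $\e^{-(\a-1)}$ have \emph{already cancelled}; no further $\a$-dependence remains to collect. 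The residual factors $\e^{-3/p^*}$ (from unscaling $L^{p^*}$), the Sobolev bound $\|\uu\|_{L^{p^*}(B_{b_1\e})}\le C(\|\nabla\uu\|_{L^p}+\e^{-1}\|\uu\|_{L^p})$, and the back-scaling $\e^{3/p-1}$ combine to $\e^{3/p-1-3/p^*}\cdot\e^{-1}=\e^{-1}$. Your sentence about ``summing the cumulative exponents from the shell volume, the cutoff gradient, \ldots'' therefore double-counts the shell contributions. For $\a>1$ this is strictly weaker than the claim, since $\frac{(3-p)\a-3}{p}-(-1)=\frac{(3-p)(\a-1)}{p}>0$.

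The repair, which is precisely what the paper's Lemma~\ref{lem:Alem1} (the $L^p$ version of Allaire's lifting lemma) encodes, is to split $\tilde\uu=\overline{\tilde\uu}+(\tilde\uu-\overline{\tilde\uu})$, with $\overline{\tilde\uu}$ the mean over $B(0,b_1)$, \emph{before} the H\"older step. For the zero-mean part, Sobolev--Poincar\'e on the fixed ball gives $\|\tilde\uu-\overline{\tilde\uu}\|_{L^{p^*}}\le C\|\nabla\tilde\uu\|_{L^p}$, so H\"older with $\|\nabla\chi_\e\|_{L^3}=O(1)$ produces only $C\|\nabla\tilde\uu\|_{L^p}$, with no lower-order term. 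For the constant part one instead uses $\|\nabla\chi_\e\|_{L^p}\sim\eta^{3/p-1}$ (here $\eta=\e^{\a-1}$), yielding $\eta^{3/p-1}|\overline{\tilde\uu}|\le C\,\eta^{3/p-1}\|\tilde\uu\|_{L^p(B(0,b_1))}$; after unscaling, $\e^{3/p-1}\cdot\eta^{3/p-1}\cdot\e^{-3/p}=\e^{\frac{(3-p)\a-3}{p}}$, the correct coefficient. Two smaller points: the same splitting must be applied to the term $\tilde\uu\cdot\nabla\chi_\e$ inside your $f_\e$; and since $f_\e$ contains the global constant $\tilde c_\e$, it is not supported only on the shell, so you need a Bogovskii operator on the full annulus $B(0,b_1)\setminus\eta\overline T$ with $\eta$-independent norm---this is the content of Lemma~\ref{lem:Alem2}, not a local shell rescaling.
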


To prove Proposition \ref{prop:uek}, we introduce the following two lemmas corresponding to the $L^p$ generalizations of Lemma 2.2.3 and Lemma 2.2.4 in \cite{ALL-NS1}.

\begin{lemma}\label{lem:Alem1}
 Let $1<p<3$, $0<\eta<1/2$ and $T^s\subset B_{1}$ be a simply connected domain of class $C^1$. There exists a linear operator $L$ from $W^{1,p}(B_1)$ to $W^{1,p}(B_1\setminus \eta \overline T^s)$ such that for any $u\in W^{1,p}(B_1)$:
 \ba\label{pt-Alem1}
 & L(u)=u \quad \mbox{on} \ \d B_1,\qquad L(u) =0 \quad \mbox{on} \ \d (\eta T^s),\\
 &\|\nabla L(u)\|_{L^p(B_1\setminus \eta \overline T^s;\R^3)}\leq C\,\Big( \|\nabla u\|_{L^p(B_1;\R^3)}+ \eta^{\frac{3}{p}-1} \| u\|_{L^p(B_1)}  \Big),\nn
 \ea
 where the constant $C$ is independent of $\eta$.

\end{lemma}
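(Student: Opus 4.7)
The plan is to build $L$ as a multiplication operator by a well-chosen cutoff. Fix a function $\psi\in C^\infty(\R^3)$ with $\psi\equiv 1$ on a neighborhood of $\overline{T^s}$, $\psi\equiv 0$ outside $B_1$, and $0\le \psi\le 1$. Rescale by setting
$$
\phi_\eta(x):=1-\psi(x/\eta),\qquad L(u):=\phi_\eta\,u.
$$
Then $\phi_\eta\equiv 0$ on a neighborhood of $\eta\overline{T^s}$ and $\phi_\eta\equiv 1$ outside $B_\eta$; in particular $\phi_\eta\equiv 1$ on $\partial B_1$ since $\eta<1/2$. The two boundary requirements in \eqref{pt-Alem1} follow by taking traces. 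Linearity is obvious, and since $\phi_\eta$ is smooth and bounded, $L$ maps $W^{1,p}(B_1)$ into $W^{1,p}(B_1\setminus\eta\overline{T^s})$.

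The only nontrivial point is the gradient estimate. Writing $\nabla L(u)=\phi_\eta\nabla u+u\,\nabla\phi_\eta$, the first term is controlled trivially by $\|\nabla u\|_{L^p(B_1;\R^3)}$. For the second, note that $|\nabla\phi_\eta(x)|=\eta^{-1}|\nabla\psi(x/\eta)|\le C\,\eta^{-1}\mathbf 1_{B_\eta}(x)$, so
$$
\|u\,\nabla\phi_\eta\|_{L^p(B_1\setminus\eta\overline{T^s};\R^3)}\le C\,\eta^{-1}\|u\|_{L^p(B_\eta)}.
$$
The main obstacle is extracting the sharp factor $\eta^{3/p-1}$ on the $L^p$ norm of $u$ rather than a $C(\|\nabla u\|_{L^p}+\|u\|_{L^p})$ bound; the point is that the naive estimate is scale-unaware and the $\eta^{3/p-1}$ factor is exactly what constants force (see below).

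To obtain the sharp bound, split $u=\bar u+(u-\bar u)$ with $\bar u:=|B_1|^{-1}\int_{B_1}u\,dx$. For the constant part, H\"older yields
$|\bar u|\le C\|u\|_{L^p(B_1)}$ and hence
$$
\eta^{-1}\|\bar u\|_{L^p(B_\eta)}=\eta^{-1}|\bar u|\,|B_\eta|^{1/p}\le C\,\eta^{3/p-1}\|u\|_{L^p(B_1)}.
$$
For the mean-zero part, use H\"older with the Sobolev conjugate $p^*=3p/(3-p)$ (available because $p<3$),
$$
\|u-\bar u\|_{L^p(B_\eta)}\le |B_\eta|^{1/p-1/p^*}\|u-\bar u\|_{L^{p^*}(B_1)}\le C\,\eta\,\|u-\bar u\|_{L^{p^*}(B_1)},
$$
and then the Sobolev embedding $W^{1,p}(B_1)\hookrightarrow L^{p^*}(B_1)$ combined with the Poincar\'e--Wirtinger inequality $\|u-\bar u\|_{L^p(B_1)}\le C\|\nabla u\|_{L^p(B_1)}$ to get $\|u-\bar u\|_{L^{p^*}(B_1)}\le C\|\nabla u\|_{L^p(B_1)}$. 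Therefore
$$
\eta^{-1}\|u-\bar u\|_{L^p(B_\eta)}\le C\|\nabla u\|_{L^p(B_1)}.
$$
Combining the two contributions completes the proof of \eqref{pt-Alem1}.
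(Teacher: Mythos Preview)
Your proof is correct and follows essentially the same cutoff strategy that the paper has in mind; the paper itself omits the details and simply refers to Allaire's Lemma 2.2.3 in \cite{ALL-NS1}, whose $L^2$ argument is precisely this multiplication-by-a-rescaled-cutoff construction, and your $L^p$ adaptation via the mean/oscillation splitting together with Sobolev--Poincar\'e is the natural generalization. One minor point: your choice of $\psi$ requires $\overline{T^s}\subset\subset B_1$, which is not explicitly stated in the lemma but is satisfied in the application (see \eqref{Bk-e} and \eqref{def-vek}); if you want to cover the case where $\overline{T^s}$ merely lies in $\overline{B_1}$, simply take $\psi\equiv 1$ on $\overline{B_1}$ and $\psi\equiv 0$ outside $B_2$, so that $\phi_\eta\equiv 1$ outside $B_{2\eta}\subset B_1$.
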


\begin{lemma}\label{lem:Alem2}
Let $1<p<3$, $0<\eta<1/2$ and $T^s\subset B_{1}$ be a simply connected domain of class $C^1$.  There exists a linear operator $\CalB_\eta$ from $L_0^p(B_1\setminus \eta \overline T^s)$ to $W_0^{1,p}(B_1\setminus \eta \overline T^s;\R^3)$ such that for any $f\in L^p_0(B_1\setminus \eta \overline T^s)$, there holds
 \ba\label{pt-Alem2}
  \dive \CalB_\eta (f) =f \quad \mbox{in} \ B_1\setminus \eta \overline T^s,\quad \|\CalB_\eta (f)\|_{W_0^{1,p}(B_1\setminus \eta \overline T^s;\R^3)}\leq C\, \|f\|_{L^p(B_1\setminus \eta \overline T^s)},\nn
 \ea
 where the constant $C$ is independent of $\eta$.
\end{lemma}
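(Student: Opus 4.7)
The plan is to construct $\CalB_\eta$ explicitly by combining the classical Bogovskii operator on two fixed auxiliary domains ($B_1$ and $B_2 \setminus \overline{T^s}$) with a rescaling step that transfers the shrinking hole into a fixed obstacle. Given $f \in L_0^p(B_1 \setminus \eta \overline{T^s})$, I first extend $f$ by zero across the hole to $\tilde f \in L_0^p(B_1)$ and apply the classical Bogovskii operator on the $\eta$-independent $C^1$ domain $B_1$ (cf.\ Chapter~III of~\cite{Galdi-book}) to obtain $\tilde v \in W_0^{1,p}(B_1;\R^3)$ with $\dive \tilde v = \tilde f$ and $\|\tilde v\|_{W_0^{1,p}(B_1)} \leq C\|f\|_{L^p}$, the constant $C$ being independent of $\eta$. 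The function $\tilde v$ already carries the correct divergence on $B_1 \setminus \eta \overline{T^s}$ but need not vanish on $\partial(\eta T^s)$.

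To repair this, I localize near the hole. Pick a cutoff $\chi_\eta \in C_c^\infty(B_{2\eta})$ with $\chi_\eta \equiv 1$ on $\overline B_\eta \supset \eta\overline{T^s}$ and $|\nabla\chi_\eta| \leq C/\eta$, and set $v^{(1)} := (1-\chi_\eta)\,\tilde v$, which lies in $W_0^{1,p}(B_1\setminus\eta\overline{T^s};\R^3)$ since it vanishes on both boundary components. A direct calculation produces the residue
\[
g := f - \dive v^{(1)} = \chi_\eta f + \tilde v \cdot \nabla \chi_\eta,
\]
supported in $B_{2\eta}\setminus\eta\overline{T^s}$. Integration by parts, using $\dive\tilde v = \tilde f$ and the fact that $\int_{\eta T^s}\dive\tilde v = \int_{\eta T^s}\tilde f = 0$ (which kills the boundary term on $\partial(\eta T^s)$), yields $\int g\,dx = 0$. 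To invert the divergence on this residue uniformly in $\eta$, I pass to the rescaled variable $y = x/\eta$ and set $G(y) := \eta\,g(\eta y)$, which belongs to $L_0^p(B_2\setminus\overline{T^s})$ because $\supp G \subset B_2$. Since $B_2 \setminus \overline{T^s}$ is a fixed $C^1$ (hence Lipschitz) domain, the classical Bogovskii operator applied there produces $V^{(2)} \in W_0^{1,p}(B_2\setminus\overline{T^s};\R^3)$ with $\dive V^{(2)} = G$ and $\|V^{(2)}\|_{W_0^{1,p}} \leq C\|G\|_{L^p}$, $C$ depending only on $T^s$. Scaling back, $v^{(2)}(x) := V^{(2)}(x/\eta)$ is supported in $B_{2\eta}\setminus\eta\overline{T^s}$ and lies in $W_0^{1,p}(B_1\setminus\eta\overline{T^s})$, and $\CalB_\eta(f) := v^{(1)}+v^{(2)}$ then satisfies $\dive\CalB_\eta(f) = f$ and is linear in $f$ by construction.

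The uniform norm estimate rests on two ingredients. First, the scaling identities $\|G\|_{L^p} = \eta^{1-3/p}\|g\|_{L^p}$ and $\|\nabla v^{(2)}\|_{L^p} = \eta^{3/p-1}\|\nabla V^{(2)}\|_{L^p}$ cancel exactly, so the fixed Bogovskii constant on $B_2 \setminus \overline{T^s}$ delivers $\|\nabla v^{(2)}\|_{L^p} \leq C\|g\|_{L^p}$ with $C$ independent of $\eta$. Second, to control $\|g\|_{L^p}$, the piece $\|\chi_\eta f\|_{L^p}$ is trivially $\leq\|f\|_{L^p}$, while for $\tilde v \cdot \nabla \chi_\eta$ I invoke the Sobolev embedding $W_0^{1,p}(B_1)\hookrightarrow L^{p^*}(B_1)$ (valid since $p<3$) with $p^* = 3p/(3-p)$, combined with H\"older on the small ball $B_{2\eta}$:
\[
\|\tilde v\|_{L^p(B_{2\eta})} \leq |B_{2\eta}|^{1/3}\|\tilde v\|_{L^{p^*}(B_{2\eta})} \leq C\eta\|\tilde v\|_{W_0^{1,p}(B_1)} \leq C\eta\|f\|_{L^p}.
\]
The factor $\eta$ absorbs the $\eta^{-1}$ from $\nabla\chi_\eta$ and gives $\|\tilde v \cdot \nabla \chi_\eta\|_{L^p} \leq C\|f\|_{L^p}$, whence $\|g\|_{L^p} \leq C\|f\|_{L^p}$. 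The same cutoff-derivative bound also controls $\|\nabla v^{(1)}\|_{L^p}$, and the Poincar\'e inequality on the bounded domain (diameter $\leq 2$) upgrades gradient bounds to full $W_0^{1,p}$ bounds.

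The main obstacle is precisely this residue $g$: the term $\tilde v\cdot\nabla\chi_\eta$ contains an explicit $\eta^{-1}$ factor in $L^\infty$, and the construction only succeeds because Sobolev embedding into $L^{p^*}$ followed by a H\"older gain of $|B_{2\eta}|^{1/3}\simeq \eta$ exactly absorbs it. This is where the subcritical assumption $p<3$ is used essentially; the condition $p>1$ enters only through the applicability of classical Bogovskii on the fixed auxiliary domains.
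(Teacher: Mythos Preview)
Your construction is correct, and the argument is complete. The paper itself does not give a proof of this lemma; it simply remarks that the proof can be obtained either by carrying Allaire's $L^2$ argument (Lemma~2.2.4 in \cite{ALL-NS1}) over to $L^p$, or by observing that the domains $B_1\setminus\eta\overline{T^s}$ are uniform John domains as $\eta\to 0$ and invoking \cite{ADM,DRS}. Your construction is precisely the $L^p$ analogue of the first route: extend by zero, apply Bogovskii on the fixed ball, cut off near the hole, and absorb the residue by rescaling onto the fixed annular-type domain $B_2\setminus\overline{T^s}$. All the scaling cancellations and the Sobolev--H\"older step you identify are exactly what makes the constants $\eta$-independent, and your observation that $p<3$ enters only through the embedding $W_0^{1,p}\hookrightarrow L^{p^*}$ used to compensate the $\eta^{-1}$ from $\nabla\chi_\eta$ is the correct diagnosis.

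Compared with the John-domain approach the paper also mentions, your argument is more hands-on but also more transparent about where the restriction $p<3$ comes from; the John-domain route, by contrast, yields the result for the full range $1<p<\infty$ at the cost of a heavier black box.
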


The proof of above two lemmas can be done by generalizing the proof of Lemma 2.2.3 and Lemma 2.2.4 in \cite{ALL-NS1} from the $L^2$ framework to the $L^p$ framework, so we omit the details. In particular, Lemma \ref{lem:Alem2} can also be proved by observing that $B_1\setminus \eta \overline T^s, \ \eta >0$ are uniform John domains  as $\eta\to 0$ (see \cite{ADM} and \cite{DRS}).

Now we apply Lemma \ref{lem:Alem1} and Lemma \ref{lem:Alem2}, together with the results we obtained in Theorem \ref{thm-sto},   to prove the following lemma:
\begin{lemma}\label{lem:Alem3}
Let $3/2<p<3$, $0<\eta<1/2$ and $T^s\subset B_{1}$ be a simply connected domain of class $C^{2,\b}$. There exists a unique solution $(\vv_\eta,p_\eta) \in W^{1,p}(B_1\setminus \eta\overline T^s;\R^3)\times L^{p}_0(B_1\setminus \eta\overline T^s)$ to the following Dirichlet problem for Stokes equations:
\be\label{def-veta}\left\{\begin{aligned}
&-\Delta \vv_{\eta} + \nabla p_{\eta}  =-\Delta \vv, \ &&\mbox{in}\ B_1\setminus \eta\overline T^s,\\
&\dive \vv_{\eta}=\dive \vv + \frac{1}{|B_1\setminus \eta\overline T^s|}\int_{\eta \overline T^{s}} \dive \vv\, \dx, \ &&\mbox{in}\ B_1\setminus \eta\overline T^s,\\
&\vv_{\eta}= \vv, \ &&\mbox{on}\ \d B_1,\\
&\vv_{\eta}=0,\ &&\mbox{on}\ \d  (\eta T^s),
\end{aligned}\right.
\ee
and there holds the estimate:
 \be\label{est-veta}
\|\nabla \vv_\eta \|_{L^p(B_1\setminus \eta \overline T^s;\R^{3\times3})}+ \|p_\eta \|_{L^p(B_1\setminus \eta \overline T^s)}\leq C\,\big( \|\nabla \vv\|_{L^p(B_1;\R^{3\times3})}+ \eta^{\frac{3}{p}-1}\| \vv \|_{L^p(B_1;\R^3)}  \big),
 \ee
 where the constant $C$ is independent of $\eta$.
\end{lemma}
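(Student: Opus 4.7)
The plan is to reduce the Stokes problem \eqref{def-veta} to one with zero boundary data and zero divergence to which Theorem \ref{thm-sto} applies, extracting uniformity in $\eta$ from the $\eta$-independent bounds supplied by Lemmas \ref{lem:Alem1} and \ref{lem:Alem2}. Concretely, I would seek a decomposition
\[
\vv_\eta = L(\vv) + \CalB_\eta(g) + \ww,
\]
where $L$, applied componentwise, is the boundary extension of Lemma \ref{lem:Alem1} (so $L(\vv)=\vv$ on $\partial B_1$ and $L(\vv)=0$ on $\partial(\eta T^s)$), while $\CalB_\eta(g) \in W_0^{1,p}$ is a Bogovskii-type corrector from Lemma \ref{lem:Alem2} whose divergence absorbs the prescribed right-hand side $\dive \vv + c$ of $\eqref{def-veta}_2$ together with the spurious divergence of $L(\vv)$. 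The point is that then $\ww$ will satisfy a genuine divergence-free, zero-boundary Stokes problem with a source in divergence form.

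In detail, set $g := \dive \vv + c - \dive L(\vv)$ with $c$ as in $\eqref{def-veta}_2$. A divergence-theorem computation, using $L(\vv)=\vv$ on $\partial B_1$ and $L(\vv)=0$ on $\partial(\eta T^s)$, yields $\int_{B_1 \setminus \eta \overline{T^s}} g \, \dx = 0$, so Lemma \ref{lem:Alem2} furnishes $\CalB_\eta(g) \in W_0^{1,p}$ with $\|\nabla \CalB_\eta(g)\|_{L^p} \leq C\|g\|_{L^p}$, the constant $C$ independent of $\eta$. The remainder $\ww := \vv_\eta - L(\vv) - \CalB_\eta(g)$ is then divergence-free, vanishes on both components of $\partial(B_1 \setminus \eta \overline{T^s})$, and solves a Stokes system with source in divergence form $\dive \bg_\eta$, where $\bg_\eta := -\nabla \vv + \nabla L(\vv) + \nabla \CalB_\eta(g)$.

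This reduced problem is precisely of the type covered by Theorem \ref{thm-sto} (with $d=3$, $\Omega = B_1$, $T = T^s$, and the small parameter $\eta$ playing the role of $\e$), giving $\|\nabla \ww\|_{L^p} + \|p_\eta\|_{L^p} \leq C\|\bg_\eta\|_{L^p}$ with $C$ independent of $\eta$. Combining this with Lemmas \ref{lem:Alem1} and \ref{lem:Alem2}, and bounding $|c|\,|B_1 \setminus \eta \overline{T^s}|^{1/p} \leq C\eta^{3/p'} \|\nabla \vv\|_{L^p(B_1)}$ by H\"older's inequality, one arrives at $\|g\|_{L^p} \leq C(\|\nabla \vv\|_{L^p(B_1)} + \eta^{3/p-1}\|\vv\|_{L^p(B_1)})$; a triangle inequality then delivers \eqref{est-veta}. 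Existence and uniqueness for each fixed $\eta$ follow from classical Stokes theory (Remark \ref{rem:stokes}) on the bounded $C^1$ domain $B_1 \setminus \eta \overline{T^s}$.

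The main obstacle I anticipate is arranging the three ingredients so that every constant is genuinely $\eta$-independent. In particular, the exact mean-zero property $\int g \, \dx = 0$ is what legitimizes absorbing the prescribed constant $c$ into the Bogovskii corrector instead of treating it as an extra error term, and the hypothesis $3/2 < p < 3$ is needed precisely so that the uniform Stokes estimate of Theorem \ref{thm-sto} applies to the residual problem.
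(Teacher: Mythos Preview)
Your proposal is correct and follows essentially the same approach as the paper: subtract the boundary extension $L(\vv)$ from Lemma \ref{lem:Alem1}, then a Bogovskii corrector $\CalB_\eta(g)$ from Lemma \ref{lem:Alem2} (your $g$ is exactly the paper's $f$), leaving a divergence-free, zero-boundary Stokes problem with source $\dive(-\nabla\vv+\nabla L(\vv)+\nabla\CalB_\eta(g))$ to which Theorem \ref{thm-sto} applies with $\eta$-independent constant. The verification of $\int g\,\dx=0$, the bound on the constant term $c$, and the reassembly via the triangle inequality all match the paper's argument.
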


\begin{proof}[Proof of {\rm Lemma \ref{lem:Alem3}}]
First of all, we observe that the compatibility condition holds:
\ba\label{com-sto-veta}
&\int_{B_1\setminus \eta\overline T^s} \dive \vv_{\eta} \,\dx = \int_{B_1\setminus \eta\overline T^s} \left(\dive \vv + \frac{1}{|B_1\setminus \eta\overline T^s|}\int_{\eta \overline  T^{s}} \dive \vv\, \dx\right) \,\dx\\
&\quad = \int_{B_1} \dive \vv  \,\dx  = \int_{\d B_1} \vv\cdot {\mathbf n}  \,{\rm d}S  = \int_{\d B_1 \cup \d (\eta T^{s})} \vv_{\eta}\cdot {\mathbf n}  \,{\rm d}S.\nn
\ea
This actually indicates, for any fixed $0<\eta<1/2$, there exists a unique solution $(\vv_\eta,p_\eta)$ in $W^{1,p}(B_1\setminus \eta\overline T^s;\R^3)\times L^{p}_0(B_1\setminus \eta\overline T^s)$ to Dirichlet problem \eqref{def-veta} by employing the classical theory for Stokes equations (see Theorem 5.1 in \cite{DM}, Section 3 in \cite{MM} or Theorem 2.9 in \cite{BS}). The key point is to show the quantitative estimate in \eqref{est-veta}, particularly the dependency on $\eta$ as $\eta\to 0$. 

\medskip

By Lemma \ref{lem:Alem1}, the new unknown $\vv_{1,\eta}:= \vv_\eta -L(\vv) $ solves
\be\label{def-veta1}\left\{\begin{aligned}
&-\Delta \vv_{1,\eta} + \nabla p_{\eta}  =-\Delta \vv + \Delta L(\vv),  &&\mbox{in}\ B_1\setminus \eta\overline T^s,\\
&\dive \vv_{1,\eta}= -\dive L(\vv)+ \dive \vv + \frac{1}{|B_1\setminus \eta \overline T^s|}\int_{\eta \overline T^{s}} \dive \vv\, \dx, &&\mbox{in}\ B_1\setminus \eta\overline T^s,\\
&\vv_{1,\eta}= 0, &&\mbox{on}\ \d B_1\cup \d  (\eta T^s).
\end{aligned}\right.\nn
\ee

Define
 $$f:=-\dive L(\vv) + \dive \vv + \frac{1}{|B_1\setminus \eta\overline T^s|}\int_{\eta \overline T^{s}} \dive \vv\, \dx.$$
By virtue of Lemma \ref{lem:Alem1}, direct calculation gives
 \ba\label{est-f-bog} \|f\|_{L^p(B_1 \setminus \eta\overline T^s )}\leq C\,\Big( \|\nabla \vv\|_{L^p(B_1;\R^{3\times3})}+ \eta^{\frac{3}{p}-1} \| \vv\|_{L^p(B_1;\R^3)}  \Big)\nn
\ea
and
\ba\label{int-f-0}
\int_{B_1 \setminus \eta\overline T^s} f\,\dx &= \int_{B_1 \setminus \eta\overline T^s}\left( -\dive L(\vv) + \dive \vv + \frac{1}{|B_1\setminus \eta\overline T^s|}\int_{\eta \overline T^{s}} \dive \vv\, \dx\right)\,\dx\\
&= \int_{B_1 \setminus \eta\overline T^s} -\dive L(\vv)\,\dx + \int_{B_1} \dive \vv \,\dx\\
&= -\int_{\d B_1 \cup \d (\eta T^{s})} L(\vv) \cdot {\mathbf n}  \,{\rm d}S  + \int_{\d B_1} \vv \cdot {\mathbf n}  \,{\rm d}S\\
&= -\int_{\d B_1 } \vv \cdot {\mathbf n}  \,{\rm d}S  + \int_{\d B_1} \vv \cdot {\mathbf n}  \,{\rm d}S\\
&=0.\nn
\ea
Then $f\in L^p_0(B_1\setminus \eta \overline T^s)$ and $\CalB_\eta(f)$ is well-defined, where $\CalB_\eta$ is the linear operator defined in Lemma \ref{lem:Alem2}. Then, by Lemma \ref{lem:Alem2}, the new unknown
$$
\vv_{2,\eta}: = \vv_{1,\eta}-\CalB_\eta(f)= \vv_\eta -L(\vv)-\CalB_\eta(f)
$$
solves
\be\label{def-veta2}\left\{\begin{aligned}
&-\Delta \vv_{2,\eta} + \nabla p_{\eta}  =\dive (-\nabla \vv + \nabla L(\vv)+\nabla \CalB_\eta(f)),  &&\mbox{in}\ B_1\setminus \eta\overline T^s,\\
&\dive \vv_{2,\eta}= 0, &&\mbox{in}\ B_1\setminus \eta\overline T^s,\\
&\vv_{2,\eta}= 0, &&\mbox{on}\ \d B_1\cup \d  (\eta T^s).
\end{aligned}\right.
\ee
We see that \eqref{def-veta2} is a Dirichlet problem for the Stokes equations with the divergence free condition, with the no-slip boundary condition and with a divergence form source term. This falls in the framework of \eqref{2}. We apply Theorem \ref{thm-sto} to conclude that there exists a unique solution $(\vv_{2,\eta},p_\eta) \in  W_0^{1,p}(B_1\setminus \eta\overline T^s;\R^3)\times L_0^{p}(B_1\setminus \eta\overline T^s)$ to \eqref{def-veta2} such that
 \ba\label{est-veta2}
\|\nabla \vv_{2,\eta} \|_{L^p(B_1\setminus \eta \overline T^s;\R^{3\times3})}+\|p_{\eta} \|_{L^p(B_1\setminus \eta \overline T^s)}  & \leq C\, \|-\nabla \vv + \nabla L(\vv)+\nabla \CalB_\eta(f)\|_{L^p(B_1;\R^{3\times3})}\\
&\leq C\,\left( \|\nabla \vv\|_{L^p(B_1;\R^{3\times3})}+ \eta^{\frac{3}{p}-1}\| \vv \|_{L^p(B_1;\R^3)}  \right).\nn
 \ea

Back to $\vv_\eta$, we obtain the estimate \eqref{est-veta} and complete the proof.

\end{proof}

\medskip

Now we are ready to prove Proposition \ref{prop:uek}.
\begin{proof}[Proof of {\rm Proposition \ref{prop:uek}}]
Let $\uu \in W_0^{1,p}(D)$ with $3/2<p<3$ be as in Proposition \ref{prop:uek}.  First of all, we observe that the Dirchlet problem \eqref{def-uek} is compatible:
\ba\label{com-sto-uek}
&\int_{B(x_k,b_1 \e)\setminus \overline T_{\e,k}} \dive \uu_{\e,k} \,\dx= \int_{B(x_k,b_1 \e)\setminus \overline T_{\e,k}} \left(\dive \uu + \frac{1}{|B(x_k,b_1 \e)\setminus \overline T_{\e,k}|}\int_{\overline T_{\e,k}} \dive \uu\, \dx\right) \,\dx\\
&\quad = \int_{B(x_k,b_1 \e)} \dive \uu  \,\dx = \int_{\d B(x_k,b_1 \e)} \uu\cdot {\mathbf n}  \,{\rm d}S = \int_{\d B(x_k,b_1 \e)\cup \d T_{\e,k}} \uu_{\e,k}\cdot {\mathbf n}  \,{\rm d}S.\nn
\ea
Thus, for any fixed $\e>0$, the classical theory implies the existence and uniqueness of solution $(\uu_{\e,k},p_{\e,k})$ in $W^{1,p}(B(x_k,b_1 \e)\setminus \overline T_{\e,k};\R^{3})\times L^{p}_0(B(x_k,b_1 \e)\setminus \overline T_{\e,k})$ to \eqref{def-uek}. 
We consider the following change of unknowns:
\be\label{uek-v-eta}
\vv_{\e,k}(\cdot ) = \vu_{\e,k}(b_1\e \cdot + x_k), \quad q_{\e,k}(\cdot ) = (b_1 \e) p_{\e,k}(b_1\e \cdot + x_k).\nn
\ee
Then
\be\label{def-vek}
(\vv_{\e,k},q_{\e,k}) \in W^{1,p}(B_1\setminus \eta\overline T^s;\R^3)\times L^{p}_0(B_1\setminus \eta\overline T^s),\quad \eta := \e^{\a-1},\ T^s := b_1^{-1}\mathcal{O}_k(T),
\ee
and there holds
\be\label{def-veta-vek}\left\{\begin{aligned}
&-\Delta \vv_{\e,k} +  \nabla q_{\e,k}  =-\Delta \vv, \ &&\mbox{in}\ B_1\setminus \eta\overline T^s,\\
&\dive \vv_{\e,k}=\dive \vv + \frac{1}{|B_1\setminus \eta\overline T^s|}\int_{\eta \overline T^{s}} \dive \vv\, \dx, \ &&\mbox{in}\ B_1\setminus \eta\overline T^s,\\
&\vv_{\e,k}= \vv, \ &&\mbox{on}\ \d B_1,\\
&\vv_{\e,k}=0,\ &&\mbox{on}\ \d  (\eta T^s),
\end{aligned}\right.\nn
\ee
where $\vv$ is defined by
\be\label{def-vv}
\vv(\cdot ) = \vu(b_1\e \cdot + x_k)\in W^{1,p}(B_1;\R^{3}).
\ee

By \eqref{Bk-e} and \eqref{def-vek}, we have $ T^s \subset \overline T^s \subset B_1$. If $\a=1$ such that $\eta=\e^{\a-1}=1$, the domain $B_1\setminus \eta\overline T^s=B_1\setminus \overline T^s$ is a fixed, bounded domain of class $C^1$. We employ the classical theory for Stokes equations in bounded domain (see for instance \cite{DM,MM,BS}) to deduce that there exists a unique solution $(\vv_{\e,k},q_{\e,k}) \in W^{1,p}(B_1\setminus \overline T^s;\R^3)\times L^{p}_0(B_1\setminus \overline T^s)$ satisfying
 \be\label{est-veta3}
\|\nabla \vv_{\e,k}\|_{L^p(B_1\setminus  \overline T^s;\R^{3\times3})}+ \|q_{\e,k} \|_{L^p(B_1\setminus \overline T^s)}\leq C\,\|\vv\|_{W^{1,p}(B_1;\R^{3})}.
 \ee
Due to \eqref{def-vek} and \eqref{def-vv}, the estimate in \eqref{est-veta3} is equivalent to
\ba\label{est-uek-1}
&\|\nabla \uu_{\e,k}\|_{L^p(B(x_k,b_1 \e)\setminus \overline T_{\e,k};\R^{{3\times3}})} + \|p_{\e,k}\|_{L^p(B(x_k,b_1 \e)\setminus \overline T_{\e,k})}  \\
&\qquad\leq C\, \Big(\|\nabla \uu\|_{L^p(B(x_k,b_1 \e);\R^{{3\times3}})}+ \frac{1}{b_1 \e}\|\uu\|_{L^p(B(x_k,b_1 \e);\R^{3})} \Big).
\ea

If $\a>1$, without loss of generality we may assume $\eta:= \e^{\a-1}<1/2$. Then by Lemma \ref{lem:Alem3}, there exists a unique solution $(\vv_{\e,k},q_{\e,k}) \in W^{1,p}(B_1\setminus \eta\overline T^s;\R^3)\times L^{p}_0(B_1\setminus \eta\overline T^s)$ satisfying
 \be\label{est-veta4}
\|\nabla \vv_{\e,k}\|_{L^p(B_1\setminus  \eta\overline T^s;\R^{3\times3})}+ \|q_{\e,k} \|_{L^p(B_1\setminus \eta\overline T^s)}\leq C\,\big( \|\nabla \vv\|_{L^p(B_1;\R^{3\times3})}+ \eta^{\frac{3}{p}-1}\| \vv \|_{L^p(B_1;\R^3)}  \big).
 \ee
Again by \eqref{def-vek} and \eqref{def-vv}, we deduce from \eqref{est-veta4} that
\ba\label{est-uek-2}
&\|\nabla \uu_{\e,k}\|_{L^p(B(x_k,b_1 \e)\setminus \overline T_{\e,k};\R^{{3\times3}})} + \|p_{\e,k}\|_{L^p(B(x_k,b_1 \e)\setminus \overline T_{\e,k})}  \\
&\qquad\leq C\, \Big(\|\nabla \uu\|_{L^p(B(x_k,b_1 \e);\R^{{3\times3}})}+ \frac{1}{b_1 \e} \eta^{\frac{3}{p}-1}\|\uu\|_{L^p(B(x_k,b_1 \e);\R^{3})} \Big).
\ea

By observing
\be\label{def-eta-e}
\e^{-1} \eta^{\frac{3}{p}-1} = \e^{-1}\e^{(\a-1)(\frac{3}{p}-1)} = \e^{\frac{(3-p)\a-3}{p}},
\ee
we obtain our desired result \eqref{est-uek-new} by using \eqref{est-uek-1} and \eqref{est-uek-2}. The proof is completed.

\end{proof}

At the end,  we verify the linear functional $R_\e (\cdot)$ defined by \eqref{def-Reu} and \eqref{def-uek} fulfills the properties stated in Theorem \ref{thm-res}.

Given $\vu \in W_{0}^{1,p}(D;\R^3)$, by Proposition \ref{prop:uek}, we have $R_\e(\vu) \in W_{0}^{1,p}(D_\e;\R^3)$ for which the zero trace property on the boundaries of holes is guaranteed by the construction \eqref{def-uek}.

Given $\vu \in W_{0}^{1,p}(D_\e;\R^3)$, let $\ww:= R_\e(\tilde\vu)$ where $\tilde \vu \in W_{0}^{1,p}(D;\R^3)$ is the zero extension of $\vu$ in $D$ defined by $\eqref{pt-res}_1$. Then the equation \eqref{def-uek} in $u_{\e,k} = R_\e(\tilde\vu)$ in $B(x_k,b_1 \e)\setminus \overline T_{\e,k}$ becomes
\be\label{def-uek-w}\left\{\begin{aligned}
&-\Delta \uu_{\e,k} + \nabla p_{\e,k}  =-\Delta \tilde \uu, \ &&\mbox{in}\ B(x_k,b_1 \e)\setminus \overline T_{\e,k},\\
&\dive \uu_{\e,k}=\dive \tilde \uu,  \ &&\mbox{in}\ B(x_k,b_1 \e)\setminus \overline T_{\e,k},\\
&\uu_{\e,k}= \tilde \uu, \ &&\mbox{on}\ \d B(x_k,b_1 \e)\cup\d  T_{\e,k}.
\end{aligned}\right.
\ee
where we used the property $\tilde \vu=0, \  \dive \tilde \vu=0$ on $\overline T_{\e,k}$. Then, the unique solution $(\uu_{\e,k},p_{\e,k}) \in W^{1,p}(B(x_k,b_1 \e)\setminus \overline T_{\e,k};\R^{3})\times L^{p}_0(B(x_k,b_1 \e)\setminus \overline T_{\e,k})$ to \eqref{def-uek-w} is simply
$$
\uu_{\e,k}= \tilde \vu =\uu, \quad p_{\e,k}=0,\quad \mbox{in $B(x_k,b_1 \e)\setminus \overline T_{\e,k}$}.
$$
This means $R_\e(\tilde \uu)$ coincides with $\uu$ near the holes. Since $R_\e(\tilde \uu)$ also coincides with $\uu$ away from the holes, as in \eqref{def-Reu}, we have $R_\e(\tilde \vu)=\vu$ in $D_\e$.

Given $\vu \in W_{0}^{1,p}(D;\R^3)$ such that $\dive \vu=0$ in $D$, it is straightforward to deduce $\dive R_\e(\vu)=0$ in $D_\e$, due to \eqref{def-Reu} and \eqref{def-uek}.

To complete the proof of Theorem \ref{thm-res}, it is left to verify the estimate in $\eqref{pt-res}_3$. Given $ \uu\in  W_{0}^{1,p}(D;\R^3)$, we calculate
\ba\label{est-Ru1}
&\|\nabla R_\e(\vu)\|^p_{L^p(D_\e;\R^{3\times3})} = \int_{D_\e} |\nabla R_\e(\uu)|^p\,\dx \\
&\quad = \int_{D_\e\setminus \left( \bigcup_{k\in K_\e} B(x_k,b_1 \e)\right)} |\nabla R_\e(\uu)|^p\,\dx + \sum_{k\in K_\e} \int_{B(x_k,b_1 \e)\setminus \overline T_{\e,k}} |\nabla R_\e(\uu)|^p\,\dx\\
&=\int_{D_\e\setminus \left( \bigcup_{k\in K_\e} B(x_k,b_1 \e)\right)} |\nabla \uu|^p\,\dx + \sum_{k\in K_\e} \|\nabla \uu_{\e,k}\|^p_{L^p(B(x_k,b_1 \e)\setminus \overline T_{\e,k};\R^{{3\times3}})},
\ea
where we used the fact that the balls $ B(x_k,b_1 \e), \ {k\in K_\e},$ are pairwise disjoint.  By Proposition \ref{prop:uek} and the inequality $\frac{a^\th+b^\th}{2}\leq (a+b)^\th \leq 2^\th (a^\th+b^\th) \  \mbox{for any $\th>0, \ a>0, \ b>0$},$ we obtain
\ba\label{est-uek-3}
\|\nabla \uu_{\e,k}\|^p_{L^p(B(x_k,b_1 \e)\setminus \overline T_{\e,k};\R^{{3\times3}})}  \leq C\, \Big(\|\nabla \uu\|^p_{L^p(B(x_k,b_1 \e);\R^{{3\times3}})}+ \e^{(3-p)\a-3}\|\uu\|^p_{L^p(B(x_k,b_1 \e);\R^{3})} \Big).
\ea
Applying \eqref{est-uek-3} into \eqref{est-Ru1}, we obtain
\ba\label{est-Ru2}
&\|\nabla R_\e(\vu)\|^p_{L^p(D_\e;\R^{3\times3})} \leq C\, \Big(\|\nabla \uu\|^p_{L^p(D;\R^{{3\times3}})}+ \e^{(3-p)\a-3}\|\uu\|^p_{L^p(D;\R^{{3}})}\Big),\nn
\ea
which implies the desired estimate in $\eqref{pt-res}_3$. We complete the proof of Theorem \ref{thm-res}.

\subsection{Proof of Corollary \ref{thm-bog}}\label{sec:bog-proof}

We prove the existence of such an operator $\CalB_\e$ satisfying the properties stated in Corollary \ref{thm-bog}. 
We will show the operator defined by
\be\label{def-be}
\CalB_\e:= R_\e\circ \CalB_D \circ E,
\ee
fulfills the properties stated in Corollary \ref{thm-bog}. Here $E$ is the zero extension operator from function spaces defined in $D_\e$ to function spaces defined in $D$, $\CalB_D$ is the classical Bogovskii's operator in domain $D$, and $R_\e$ is the restriction operator constructed in the proof of Theorem \ref{thm-res} in Section \ref{sec:res-proof}, precisely in \eqref{def-Reu} and \eqref{def-uek}. This construction can be understood in such a way: extension  + classical Bogovskii operator in homogenized domain + restriction.

Let $3/2<p<3$ and $f\in L_0^p(D_\e)$. It is direct to find that $E$ is a linear operator from $L_0^p(D_\e)$ to $L_0^p(D)$ with operator norm $1$. The classical Bogovskii's operator $\CalB_D$ is a linear operator from $L_0^p(D)$ to $W_0^{1,p}(D;\R^3)$ with operator norm only depending on $p$ and the Lipschitz character of $D$, therefore interdependent of $\e$. Finally by the properties of the restriction operator given in Theorem \ref{thm-res}, we conclude that the operator $\CalB_\e$ defined in \eqref{def-be} is a linear operator from $L_0^p(D_\e)$ to $W_0^{1,p}(D_\e;\R^3)$ such that
\be\label{pt-bog1}
\|\CalB_\e(f)\|_{W_0^{1,p}(D_\e;\R^3)}\leq C\, \left(1+\e^{\frac{(3-p)\a-3}{p}}\right)\|f\|_{L^p(D_\e)}.\nn
\ee

It is left to check the property in $\eqref{pt-bog}_1$. Let $\uu:=\CalB_D(E(f))$, then $\uu \in W_0^{1,p}(D;\R^3)$ satisfies
$$
\dive \uu = f \quad \mbox{in}\ D_\e,\quad \dive \uu = 0 \quad \mbox{on}\  \bigcup_{k\in K_\e} \overline T_{\e,k}.
$$
By the properties of the restriction operator in \eqref{def-Reu} and \eqref{def-uek}, we finally obtain
$$
\dive R_\e(\uu) = \dive \uu = f \quad \mbox{in}\ D_\e.
$$
The proof is completed.






\end{document}